\theoremstyle{plain}
\newtheorem{theorem}{Theorem}[section]
\newtheorem{lemma}[theorem]{Lemma}
\newtheorem{cor}[theorem]{Corollary}
\newtheorem*{theorem*}{Theorem}
\theoremstyle{remark}
\theoremstyle{remark} \newtheorem*{remark}{Remark}
\theoremstyle{remark} 
\newcommand{\bb}{\mathbb}
\newcommand{\mcal}{\mathcal}
\newcommand{\mrm}{\mathrm}
\newcommand{\mbf}{\mathbf}
\newcommand{\mfk}{\mathfrak}
\newcommand{\ov}{\overline}
\newcommand{\irr}{\mathrm{Irr}}
\newcommand{\cS}{\mathcal{S}}
\newcommand{\cC}{\mathcal{C}}
\newcommand{\cA}{\mathcal{A}}
\newcommand{\BM}{\mathrm{span}\ \mathcal{A}}
\newcommand{\ignore}[1]{{}}
\newcommand{\abs}[1]{|#1|}
\title{Perfect and Multiple state Transfer in Oriented Cayley Graphs}
\author{Ada Chan}
\address{Department of Mathematics and Statistics, York University, Toronto, ON, Canada N3J 1P3}
\email{ssachan@yorku.ca}
\author[V.R.T. Pantangi]{Venkata Raghu Tej Pantangi}
\address{}
\email{pvrt1990@gmail.com}
\author[S.Razafimahatratra]{Andriaherimanana Sarobidy Razafimahatratra}
\address{School of Mathematics and Statistics, Carleton University, 1125 Colonel by drive, Ottawa, ON K1S 5B6, Canada}
\email{sarobidy@phystech.edu}
\author{Peter Sin}
\address{Department of Mathematics, University of Florida, P. O. Box 118105, Gainesville FL 32611, USA}
\email{sin@ufl.edu}
\thanks{This work was partially supported by a grant from the Simons Foundation (\#633214
 to Peter Sin). A. S. Razafimahatratra was supported by the Fields-AIMS-Perimeter Postdoctoral Fellowship.
 A. Chan gratefully acknowledges the support of the NSERC Grant No. RGPIN-2021-03609.}
\begin{document}
\begin{abstract} We study perfect state transfer and multiple state transfer in oriented normal Cayley graphs. We construct examples in a variety of groups, ranging from abelian to nonsolvable, and establish some general restrictions and nonexistence results.
\end{abstract}
\maketitle
\section{Introduction}
An oriented graph is a directed graph with no loops or multiple edges. Given a directed graph $X$, we denote by $\mrm{V}(X)$  the vertex set, and by $\mrm{A}(X) \subset \mrm{V}(X)\times \mrm{V}(X)$ the set of arcs. The adjacency matrix of an oriented graph $X$ is the matrix $A_X$ whose rows and columns are indexed by $\mrm{V}(X)$ (in some order) satisfying 
\[ 
A_{a,b} = \begin{cases} 1 & \text{if $(a,b) \in \mrm{A}(X)$,} \\
-1 & \text{if $(b,a) \in \mrm{A}(X)$,} \\
0 & \text{otherwise.}
\end{cases}
\] 

Zimbor\'as et al. \cite{Zimboras_et_al} and Chavas et al. \cite[Theorem 2.1]{Chaves_et_al} have shown that the Hermitian matrix
$iA_X$ can be realized as the restriction to the $1$-excitation subspace of a suitable Hamiltonian
for a system of $n$ qubits. The time evolution of some initial state in this subspace is what
is meant by a quantum walk on an oriented graph $X$, and is found by solving the
Schr\"odinger equation. Thus, a quantum walk on an oriented graph is determined by an initial state and the time-dependent transition matrices, $U_X(t)=e^{-it(iA_X)}=e^{tA_X}$. 
This extension of quantum walks on undirected graphs to oriented graphs was first defined in \cite{cameron2014universal}. Of central importance to the theory is the phenomenon of {\it perfect state transfer (PST)}. Given a vertex $b\in V(X)$ and a matrix $B$ indexed by $V(X)$ in its rows and columns, we shall denote by $Bb$ the column of $B$ corresponding to $b$, which is equivalent to identifying $b \in V(X)$ with the corresponding canonical vector of $b$ in $\mathbb{C}^{V(X)}$. We say that perfect state transfer occurs between a vertex $a \in \mrm{V}(X)$ to a vertex $b \in \mrm{V}(X)$ at a time $\tau$ if $U_X(\tau)a = \lambda b$, for some complex number $\lambda $ of norm $1$. 
If $a=b$, then we say $a$ is {\it periodic}.  As $U_X(\tau)$ is a real matrix, we must have $\lambda= \pm 1$. The quantity $\lambda$ is known as the phase.  (We drop the subscripts in $A_X$ and $U_X(t)$ if the graph is clear from the context.)

For $a\in V(X)$, let
\begin{equation*}
    \cS_a = \left\{ b \in V(X) \ \vert \ U(\tau)a = \lambda b, \text{for some $\tau > 0$ and $\vert\lambda\vert=1$}\right\}.
\end{equation*}
Suppose PST occurs from $a$ to $b\ (\neq a)$ in $X$.
It follows from Theorem~8.1 and Lemma~8.3 of \cite{GodsilRST} that $a$ is periodic and $a\in \cS_a$.  Let $b$ and $c$ be distinct vertices in $\cS_a$.  For $u\in \{a,b,c\}$, let $\tau_u>0$ denote the minimum time where $U(\tau_u)a = \lambda_u u$, for some phase $\lambda_u$. 
Note that $\tau_b, \tau_c \leq \tau_a$.
If $\tau_b < \tau_c$ then PST occurs from $b$ to $c$ at time $(\tau_c-\tau_b)$,
and PST occurs from $c$ to $b$ at time $(\tau_a-\tau_c+\tau_b)$.  
Hence PST occurs between every pair of distinct vertices in $\cS_a$, and $\cS_b=\cS_a$ for $b\in \cS_a$.  
When $\vert \cS_a\vert \geq 3$, $X$ is said to have {\it multiple state transfer} (MST) on $\cS_a$. This phenomenon cannot be observed in undirected graphs as PST in such graphs is monogamous (c.f \cite{kay}). However, \cite{cameron2014universal, godsil2020perfect, acuaviva2023state, godsil2024oriented} provide examples of multiple state transfer in oriented graphs. 

The purpose of the present paper is to study state transfer in oriented normal Cayley graphs. The spectral
theory of such a graph is closely related to
the character theory of the underlying group,
which provides powerful machinery for our investigation.
We have two main goals. The first is  to construct examples of perfect and multiple state transfer for as wide a variety of groups as possible, ranging from  abelian to nonsolvable groups. The second is to find theoretical restrictions on the existence of state transfer on these graphs.

The paper is organized as follows. 
We start in \S~\ref{sec:BM} with some background on oriented graphs in Bose-Mesner algebras, of which
normal oriented Cayley graphs are a particular case.
A key result is that, for any vertex $a$ involved in PST, $\cS_a$ is permuted cyclically by a permutation matrix in the Bose-Mesner algebra.  Moreover, 
we show that $\abs{\cS_a}$ must lie in the set $\{2,3,4,6\}$.

Next, in \S~\ref{sec:Cayley} we develop the theory for oriented normal Cayley graphs.  The cyclic permutation of $\cS_a$ of \S~\ref{sec:BM} takes the form of a
central element of order $\abs{\cS_a}$. We obtain a characterization of PST in terms of the characters of the group and its values on the connection set and this central element. This characterization is used later for the construction of examples of PST/MST, but also provides an easily-checked criterion for nonexistence of PST in any oriented normal Cayley graph on a given group based on the field of character values.

In \S~\ref{sec:solvable} we show that for solvable groups the case $\abs{\cS_a}=6$ can be ruled out. 
The proof is an application of the nonexistence criterion, together with the theory of $p$-special characters of solvable groups from \cite{Gajendragadkar}.
The general question of existence of an oriented graph 
with MST on a set of size 6 remains open.
Our results show that, in order for such a graph to be an oriented normal Cayley graph, the underlying group must be nonsolvable.

The remaining sections, which describe examples of PST in various classes of groups, may be read independently of each other.

In \S~\ref{sec:abelian}, we consider PST in oriented Cayley graphs on ${\mathbb Z_3}^n$ and ${\mathbb Z_4}^n$, constructing examples of PST and
ruling out MST on sets of size 4.

Examples of MST on sets of size 3 in oriented normal Cayley graphs on nonabelian groups are constructed on
extraspecial $3$-groups in \S~\ref{sec:extraspecial} 
and some nonsolvable examples are described in \S~\ref{sec:nonsolvable}.

Examples of MST on sets of size 4 in oriented normal Cayley graphs on nonabelian groups are constructed on
the modular maximal cyclic $2$-groups in \S~\ref{sec:modular2gps}.

In the final section \S~\ref{sec:wreath} we show that if an oriented normal Cayley graph on a group $G$ admits PST, then there is an associated graph
admitting PST on the wreath product $G\wr S_n$,
which provides a general method to construct nonabelian examples, including examples of MST on sets of size 4
on nonsolvable groups.

\section{Oriented graphs in Bose-Mesner algebras}\label{sec:BM}

An {\it association scheme} is a set $\cA=\{A_0, \ldots, A_d\}$ of $v\times v$ $01$-matrices satisfying
\begin{enumerate}[i.]
\item
$A_0=I$,
\item
$\displaystyle \sum_{j=0}^d A_j = J$, the matrix of all ones,
\item 
$A_j^T \in \cA$, for $j=0,\ldots, d$,
\item
$A_i A_j = A_jA_i\in \BM$, for $i,j = 0,\ldots,d$.
\end{enumerate}
We call $\BM$ the {\it Bose-Mesner algebra of $\cA$}, which is a commutative algebra closed under matrix multiplication, the Schur product and matrix transposition.   In particular, there exist integers $k_0,\ldots, k_d$ such that
$A_j J = JA_j = k_jJ$ for $j=0,\ldots, d$.  Further, if $j\neq 0$
then $A_j$ has zero diagonal entries.

For $a\in V(X)$, recall that
\begin{equation*}
	\cS_a = \left\{ b \in V(X) \ \vert \ U(\tau)a = \lambda b, \text{for some $\tau > 0$ and $\vert\lambda\vert=1$}\right\}.
\end{equation*}
Define the relation $\equiv $ on $V(X)$ such that for $a,b\in V(X)$,
\begin{align*}
	a\equiv b \mbox{ if and only if PST occurs at a certain time $\tau>0$ from $a$ = $b$}. 
\end{align*}
Clearly, $\equiv$ is an equivalence relation on $V(X)$, and the equivalence class containing $a$ is the set $\mathcal{S}_a$.

\begin{lemma}\label{lem:BMalgP}
 Let $X$ be an oriented graph whose adjacency matrix $A$ belongs to the Bose-Mesner algebra of $\cA$.  
 If PST occurs from $a$ to $b$ where $a\neq b$ at time $\tau$, then $U(\tau) \in \cA$ is the permutation 
 matrix of a fixed-point-free automorphism of $X$.
\end{lemma}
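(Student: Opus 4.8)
The plan is to exploit the rigidity coming from two facts about $U(\tau)=e^{\tau A}$. First, since $X$ is oriented we have $A^{T}=-A$, so $U(\tau)^{T}=e^{-\tau A}=U(\tau)^{-1}$ and $U(\tau)$ is a real orthogonal matrix. Second, $U(\tau)$ is a polynomial in $A$, hence lies in $\BM$, and its coefficients are real by the reality of $U(\tau)$. The PST hypothesis sends the canonical vector $a$ to $\lambda b$ with $\lambda=\pm 1$, so the column of $U(\tau)$ indexed by $a$ has a single nonzero entry, namely $\lambda$ in the $(b,a)$ position.

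First I would write $U(\tau)=\sum_{j=0}^{d}c_{j}A_{j}$ with $c_{j}\in\mathbb{R}$. The crucial observation is that, since the $A_{j}$ are $01$-matrices with $\sum_{j}A_{j}=J$, each ordered pair of vertices lies in exactly one relation; consequently the $(c,d)$ entry of $U(\tau)$ equals $c_{j}$ for the unique $j$ with $(A_{j})_{c,d}=1$. Reading off column $a$: the $(b,a)$ entry yields $c_{j_{0}}=\lambda$, where $j_{0}$ is the relation containing $(b,a)$, while every other entry of that column vanishes. Because each $A_{j}$ has valency $k_{j}\ge 1$, its column $a$ contains a $1$; when $j\neq j_{0}$ this $1$ must sit in a row $c\neq b$ (otherwise $(b,a)$ would lie in relation $j$ as well as in $j_{0}$), and the matching entry $U(\tau)_{c,a}=c_{j}$ is forced to be $0$. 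Hence $c_{j}=0$ for all $j\neq j_{0}$, giving $U(\tau)=\lambda A_{j_{0}}$. I expect this coefficient-vanishing step to be the main point of the argument; everything afterwards is bookkeeping with the scheme axioms.

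Having reduced to $U(\tau)=\lambda A_{j_{0}}$, orthogonality handles the structural part: $I=U(\tau)U(\tau)^{T}=A_{j_{0}}A_{j_{0}}^{T}$, and comparing diagonal entries gives $k_{j_{0}}=1$, so $A_{j_{0}}$ has exactly one $1$ in each row and column and is the permutation matrix of some permutation $\sigma$. To pin down the phase I would use that the all-ones vector $\mathbf{1}$ is fixed by $A$: a relation and its transpose share the same valency, and since $A^{T}=-A$ the corresponding coefficients of $A$ are opposite (self-transpose relations contribute $0$), so $A\mathbf{1}=0$ and therefore $U(\tau)\mathbf{1}=\mathbf{1}$. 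Since also $A_{j_{0}}\mathbf{1}=\mathbf{1}$, comparing gives $\lambda=1$, and thus $U(\tau)=A_{j_{0}}\in\cA$.

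Finally, since $U(\tau)=e^{\tau A}$ commutes with $A$, the permutation matrix $A_{j_{0}}$ commutes with $A$, which is precisely the statement that $\sigma$ is an automorphism of the oriented graph $X$. Because $a\neq b$ we have $j_{0}\neq 0$, so by the scheme axioms $A_{j_{0}}$ has zero diagonal, meaning $\sigma$ has no fixed points. This yields the fixed-point-free automorphism claimed in the lemma.
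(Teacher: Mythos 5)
Your proof is correct and takes essentially the same route as the paper: expand $U(\tau)$ in the basis $\{A_j\}$, use the fact that column $a$ of $U(\tau)$ has a single nonzero entry to force $U(\tau)=\lambda A_{j_0}$ with $A_{j_0}$ a permutation matrix, deduce $\lambda=1$ from $A\mathbf{1}=0$, obtain the automorphism property from commutation with $A$, and fixed-point-freeness from $j_0\neq 0$ and the zero-diagonal axiom. The only cosmetic difference is that you derive $k_{j_0}=1$ from orthogonality of $U(\tau)$ (via $A^{T}=-A$), whereas the paper reads it off directly from the single nonzero entry in column $a$ of $A_{j_0}$.
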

\begin{proof}
    First observe that $U(t) \in \BM$, for any time $t$. If PST occurs from $a$ to $b$ at time $\tau$ then the $(b,a)$-entry is the only non-zero entry in column $a$ of $U(\tau)$.
    We see the $U(\tau) = \lambda A_j$ for some $A_j \in \cA$ with $k_j=1$.  Since $(A_j)_{a,a}=0$, $j\neq 0$ and $A_j$ is a fixed-point-free permutation.  

    Now $A$ is a sum of matrices of the form $A_i - A_i^T$.  Since $A_iJ=A_i^TJ$, the vector of all ones ${\bf 1}$ is an eigenvector of $A$ with eigenvalue $0$.
    Whence
    \begin{equation*}
      \lambda {\bf 1} =  \lambda A_j {\bf 1} = U(\tau) {\bf 1}= e^{0\tau} {\bf 1},
    \end{equation*}
    and $\lambda=1$. 
    Since $A_j$ commutes with $A$, $A_j$ is the permutation matrix of an automorphism of $X$.
\end{proof}

\begin{theorem}\label{thm:BMalgCyclicP}
    Let $X$ be an oriented graph with adjacency matrix $A$ belongs to the Bose-Mesner algebra of some association scheme $\cA$.
    Let $\tau>0$ be the smallest time at which PST occurs from $a$ to another vertex in $X$.
    Then, the following statements hold.
    \begin{enumerate}[(a)]
    	\item $U(\tau)$ is an automorphism of $X$ that acts cyclically on $\cS_a$. 
    	\item The matrix $U(\tau)$ has order $|\cS_a|$.
    	\item For any two vertices $a,b\in V(X)$ such that $a \not\equiv b$, we have $|\cS_a| = |\cS_b|$
    \end{enumerate}
        
\end{theorem}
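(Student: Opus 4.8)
The plan is to set $\sigma := U(\tau)$ and to read off everything from Lemma~\ref{lem:BMalgP}, which already tells us that $\sigma$ is the permutation matrix of a fixed-point-free automorphism of $X$ lying in $\cA$, with phase $1$. Since $U(t)=e^{tA}$ obeys $U(s+t)=U(s)U(t)$, we get $U(k\tau)=\sigma^k$ for all integers $k$, so PST occurs from $a$ to the vertex $\sigma^k(a)$ at time $k\tau$. In particular the whole $\sigma$-orbit of $a$ lies in $\cS_a$, which gives one inclusion for (a).

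For the reverse inclusion in (a), I would take $b\in\cS_a$ with $U(s)a=\lambda b$ for some $s>0$ and run a descent: since $U(s-\tau)a=U(-\tau)U(s)a=\lambda\,\sigma^{-1}(b)$, the time $s-\tau$ is again a PST time from $a$ whenever $s>\tau$. Repeating this and invoking the minimality of $\tau$ forces $s$ to be an integer multiple of $\tau$, whence $b=\sigma^k(a)$; thus $\cS_a$ is exactly the $\sigma$-orbit of $a$, on which $\sigma$ acts as a single cycle. The one point needing care here is that no event at all---neither a transfer nor mere periodicity---can occur in the open interval $(0,\tau)$: a period $U(s')a=\pm a$ with $0<s'<\tau$ would itself produce PST from $a$ to $\sigma(a)\neq a$ at time $\tau-s'<\tau$, contradicting minimality.

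The engine of (b), and in fact of the whole theorem, is an elementary property of $\BM$: because $A_0=I$ and every other $A_i$ has zero diagonal, \emph{every} matrix in $\BM$ has constant diagonal, so any permutation matrix in $\BM$ that fixes even a single vertex must equal $I$. Applying this to $\sigma^{\abs{\cS_a}}\in\BM$---which fixes $a$ since the $\sigma$-cycle through $a$ has length $\abs{\cS_a}$ by (a)---gives $\sigma^{\abs{\cS_a}}=I$, while $\sigma^k(a)\neq a$ for $0<k<\abs{\cS_a}$ rules out smaller powers; hence $U(\tau)=\sigma$ has order exactly $\abs{\cS_a}$.

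Finally, for (c) I would first show that the minimal PST time is the same at every vertex. Since $\sigma$ is fixed-point-free, $\sigma(b)\neq b$ for every $b$, so PST occurs from $b$ at time $\tau$; hence the minimal transfer time from $b$ is at most that from $a$, and the symmetric argument gives equality. Consequently both the time $\tau$ and the matrix $\sigma=U(\tau)$ are independent of the chosen vertex, so part (b), applied once at $a$ and once at $b$, identifies both $\abs{\cS_a}$ and $\abs{\cS_b}$ with the order of this single matrix $\sigma$, yielding $\abs{\cS_a}=\abs{\cS_b}$; equivalently, every cycle of the fixed-point-free permutation $\sigma\in\BM$ has length equal to the order of $\sigma$. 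I expect the main obstacle to be conceptual rather than computational: spotting that the constant-diagonal property of $\BM$ is precisely what upgrades the local fact ``$\sigma^k$ fixes $a$'' to the global fact ``$\sigma^k=I$,'' which is the common mechanism behind both the order computation in (b) and the uniform cycle length in (c).
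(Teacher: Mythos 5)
Your proposal is correct, and it rests on the same two pillars as the paper's own proof: Lemma~\ref{lem:BMalgP} (each PST time yields a phase-$1$, fixed-point-free permutation matrix lying in the scheme) and the constant-diagonal property of $\BM$, i.e.\ that a permutation matrix in the algebra with even one fixed point must be $I$ --- which is exactly the paper's mechanism for (b). Where you genuinely diverge is in the organization of (a). The paper lists the minimal transfer times $\tau_0<\tau_1<\cdots<\tau_m$ to the vertices of $\cS_a$ and shows, by a two-case contradiction argument, that all consecutive differences equal $\tau_1$ and that all the associated permutation matrices coincide, thereby building the cycle $(u_0,\ldots,u_m)$ step by step. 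You instead exploit the group law $U(s+t)=U(s)U(t)$ to run a descent: any PST time $s$ from $a$ is reduced modulo $\tau$, and minimality --- together with your correct observation that even a mere periodicity event at a time $s'\in(0,\tau)$ would manufacture a transfer to $\sigma(a)$ at time $\tau-s'<\tau$ --- forces $s$ to be an integer multiple of $\tau$, so $\cS_a$ is exactly the $\langle\sigma\rangle$-orbit of $a$. The two arguments are logically equivalent, but yours is shorter and avoids the case analysis, while the paper's version explicitly exhibits the ordering of $\cS_a$ by transfer times. For (c) you also supply a detail the paper leaves implicit: the minimal PST time is the same at every vertex (fixed-point-freeness of $\sigma$ gives $\tau_b\le\tau$, and Lemma~\ref{lem:BMalgP} applied to $U(\tau_b)$ gives $\tau\le\tau_b$), so that (a) and (b) can be applied at $b$ with the very same matrix $\sigma$; the paper's ``(c) follows immediately'' silently uses this fact, so your write-up is the more complete one on this point.
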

\begin{proof}
(a) Let $\cS_a =\{u_0, u_1, \ldots, u_m\}$ where $u_0=a$.
For $j=1,\ldots, m$, let $\tau_j$ be the minimum PST time from $a$ to $u_j$, and let $\tau_0=0$.  Without loss of generality, we assume 
\begin{equation*}
    \tau_0<\tau_1 < \cdots < \tau_m.
\end{equation*}  
Note that $\tau_m$ is less than the minimum period of $a$. 

For $j=1,\ldots,m$, PST from $u_{j-1}$ to $u_j$ occurs at time $(\tau_j-\tau_{j-1})$, hence 
\begin{equation*}
    U\left(\tau_j-\tau_{j-1}\right) = \lambda_j P_j,
\end{equation*}
for some fixed-point-free permutation matrix $P_j \in \cA$ satisfying 
$P_ju_{j-1}=u_j$ and phase $\lambda_j$.

Suppose there exists $k<m$ such that $\tau_1=\tau_j-\tau_{j-1}$,
for $j=1,\ldots,k$, and $\tau_1\neq \tau_{k+1}-\tau_k$.
Then $P_1=\cdots=P_k$ and $P_{k+1}$ are distinct permutation matrices in $\cA$ and $P_1 u_j \neq P_{k+1}u_j$, for $j=0,\ldots,m$.

If $\tau_{k+1}-\tau_k<\tau_1$ then 
\begin{equation*}
    U(\tau_{k+1}-\tau_k) a = \lambda_{k+1}P_{k+1}a.
\end{equation*}
So PST occurs from $a$ to $P_{k+1}a$ at time strictly less than $\tau_1$, which contradicts our choice of $u_1$.

If $\tau_{k+1}-\tau_k > \tau_1$ then 
\begin{equation*}
U(\tau_1+\tau_k) a = U(\tau_1) U(\tau_k)a = \lambda_1\lambda_k P_1u_k
\end{equation*}
and PST occurs from $a$ to $P_1u_k\neq P_{k+1}u_k (= u_{k+1})$ at time $(\tau_1+\tau_k)$.
Since $\tau_1+\tau_k < \tau_{k+1}$ is less than the minimum period of $a$, $P_1u_k \neq a$.
Further 
\begin{equation*}
   P_1u_k \not\in \{u_1, \ldots,u_k\} 
\end{equation*} 
because $P_1$ is a permutation matrix satisfying $P_1u_{j-1}=u_j$,
for $j=0,\ldots,k$.  The occurrence of PST from $a$
to  $P_1u_k \not \in \{u_0,\ldots, u_{k+1}\}$ contradicts our choice of $u_{k+1}$.

We conclude that $P_1=\cdots = P_m$, and when restricted to $\cS_a$, this permutation is
the cycle $\left(u_0, u_1, \ldots, u_m\right)$.

(b) Let $P = U(\tau)$. We have seen previously that $P \in \cA$ is a permutation matrix. Let $k$ be the multiplicative order of $P$. When restricted onto $\cS_a$, we know from (a) that the permutation corresponding to $P$ is an $|\cS_a|$-cycle. Hence, $P^{|\cS_a|}$ has diagonal entries equal to $1$. Since $P \in \cA$, we must have that $P^{|\cS_a|} = I$. In other words, the multiplicative order of $P$ is equal to $|\cS_a|$, for $a\in V(X)$. (c) follows immediately from the latter.
\end{proof}

The first part of the following result is a special case of Theorem~6.4.3 in \cite{lato2019quantum}.

\begin{cor}\label{cor:BMalgSize}
    Let $X$ be an oriented graph with adjacency matrix $A$ belongs to the Bose-Mesner algebra of some association scheme $\cA$.
    If PST occurs from $a$ to another vertex in $X$ at time $\tau$ then $\vert \cS_a\vert \in\{2,3,4,6\}$.
    {Moreover, $\tau \in \frac{\pi}{\sqrt{3}} \bb{Q}$ if $\vert \cS_a\vert \in \{3,6\}$, and $\tau \in \pi \bb{Q}$ if $\vert \cS_a\vert =4$.}
\end{cor}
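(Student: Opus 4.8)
The plan is to exploit the fact that $U(\tau)=e^{\tau A}$ is simultaneously the matrix exponential of a real skew-symmetric integer matrix and, by Lemma~\ref{lem:BMalgP}, an \emph{integer permutation matrix}, and to compare the two descriptions through Galois theory over $\mathbb{Q}$. Write the distinct eigenvalues of $A$ as $\mu_0,\dots,\mu_r$; since $A^{T}=-A$ these are purely imaginary, say $\mu_s=i\theta_s$ with $\theta_s\in\mathbb{R}$. Because $A$ has entries in $\{-1,0,1\}$, Lemma~\ref{lem:BMalgP} gives $U(\tau)=Q$ for an integer permutation matrix $Q\in\cA$ (the phase is $1$), so every eigenvalue $e^{\tau\mu_s}=e^{i\tau\theta_s}$ of $Q$ is a root of unity. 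The goal is to show that the eigenvalue field $K:=\mathbb{Q}(\mu_0,\dots,\mu_r)$ is imaginary quadratic and that the roots of unity $e^{i\tau\theta_s}$ all lie in $K$; the constraints on $\abs{\cS_a}$ and on $\tau$ then follow from the classification of roots of unity in imaginary quadratic fields.

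First I would establish that $K$ is imaginary quadratic. From $e^{i\tau\theta_s}$ being a root of unity, each $\theta_s$ is an integer multiple of $\gamma:=2\pi/(\tau N)$, where $N$ is the order of $Q$; in particular the nonzero $\theta_s^{2}$ are pairwise rational multiples of one another. On the other hand $A^{2}$ is a symmetric integer matrix, so its eigenvalues $-\theta_s^{2}$ are algebraic integers, and the Galois group permutes them. A short averaging argument — every Galois conjugate of $\theta_s^{2}$ is a \emph{positive rational} multiple of $\theta_s^{2}$, and the sum over the orbit is rational — forces $\theta_s^{2}\in\mathbb{Z}$ for all $s$. Consequently $\gamma^{2}=\theta_s^{2}/c_s^{2}\in\mathbb{Q}$ for any $s$ with $\theta_s=\gamma c_s\neq 0$, every $\mu_s$ lies in $\mathbb{Q}(i\gamma)$, and since $(i\gamma)^{2}=-\gamma^{2}<0$ the field $K=\mathbb{Q}(i\gamma)$ is imaginary quadratic (it collapses to $\mathbb{Q}$ only when $A=0$, which is excluded).

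Next I would run a Galois-descent argument to locate the roots of unity $e^{\tau\mu_s}$ inside $K$; this is the step I expect to be the crux. Using Lagrange interpolation, the spectral idempotents $F_{\mu_s}=\prod_{t\neq s}(A-\mu_t I)/(\mu_s-\mu_t)$ have entries in $K$, and $Q=U(\tau)=\sum_s e^{\tau\mu_s}F_{\mu_s}$. Applying any $\sigma\in\mathrm{Gal}(\overline{\mathbb{Q}}/K)$ entrywise fixes both $Q$ (integer entries) and each $F_{\mu_s}$ (entries in $K$); since the $F_{\mu_s}$ are linearly independent, $\sigma$ must fix every $e^{\tau\mu_s}$, whence $e^{\tau\mu_s}\in K$ for all $s$. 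Applying this to the minimal PST time $\tau_0$, Theorem~\ref{thm:BMalgCyclicP} says that $P=U(\tau_0)$ restricts to an $\abs{\cS_a}$-cycle on $\cS_a$, so its eigenvalues include a primitive $\abs{\cS_a}$-th root of unity $\zeta$; by the above $\zeta\in K$, hence $\mathbb{Q}(\zeta)\subseteq K$ and $\varphi(\abs{\cS_a})=[\mathbb{Q}(\zeta):\mathbb{Q}]\le[K:\mathbb{Q}]=2$. The inequality $\varphi(\abs{\cS_a})\le 2$ together with $\abs{\cS_a}\ge 2$ yields $\abs{\cS_a}\in\{2,3,4,6\}$.

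Finally, for the statement about $\tau$, I would identify $K$ precisely. When $\abs{\cS_a}\in\{3,6\}$ the containment $\mathbb{Q}(\zeta)\subseteq K$ with both fields of degree $2$ gives $K=\mathbb{Q}(\sqrt{-3})$, and when $\abs{\cS_a}=4$ it gives $K=\mathbb{Q}(i)$. The group of roots of unity of an imaginary quadratic field has order $6$ for $\mathbb{Q}(\sqrt{-3})$ and $4$ for $\mathbb{Q}(i)$, so for the given PST time each $e^{i\tau\theta_s}\in K$ is a $6$th (resp.\ $4$th) root of unity. Choosing any $s$ with $\theta_s\neq 0$ and writing $\theta_s=q\sqrt{3}$ with $q\in\mathbb{Q}^{\times}$ (resp.\ $\theta_s=q\in\mathbb{Q}^{\times}$), the relation $\tau\theta_s\in\frac{\pi}{3}\mathbb{Z}$ (resp.\ $\tau\theta_s\in\frac{\pi}{2}\mathbb{Z}$) solves to $\tau\in\frac{\pi}{\sqrt{3}}\mathbb{Q}$ (resp.\ $\tau\in\pi\mathbb{Q}$), as required. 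The only genuine work beyond bookkeeping is the two Galois arguments in the middle paragraphs.
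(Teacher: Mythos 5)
Your proposal is correct, and its skeleton is the same as the paper's: show that the eigenvalues of $A$ generate an imaginary quadratic field $K$, show that the root-of-unity eigenvalues of the permutation matrix $U(\tau)$ also lie in $K$, and then invoke the classification of roots of unity in quadratic fields to get $\abs{\cS_a}\in\{2,3,4,6\}$ and the constraints on $\tau$. The real difference is self-containedness. The paper imports the first ingredient from \cite[Theorem 6.1]{godsil2020perfect} (eigenvalue support contained in $\mathbb{Z}(i\sqrt{\Delta})$ for a square-free $\Delta$), the second from Lemma 2.10.3 of \cite{lato2019quantum}, and the time constraints from Corollary 6.1.3 of that thesis. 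You instead prove both ingredients directly: your Galois-orbit averaging argument (all nonzero $\theta_s^2$ are positive rational multiples of one another because $\theta_s\in\gamma\mathbb{Z}$, so rationality of the orbit sum plus algebraic integrality forces $\theta_s^2\in\mathbb{Z}$) is in essence a proof of the Godsil--Lato result in this special case, and your descent argument via the spectral idempotents $F_{\mu_s}$ (fixed entrywise by $\mathrm{Gal}(\overline{\mathbb{Q}}/K)$ and linearly independent, so the coefficients $e^{\tau\mu_s}$ are fixed and hence lie in $K$) replaces Lato's lemma; note this step is legitimate precisely because the $e^{\tau\mu_s}$ are roots of unity and hence algebraic, which in turn rests on Lemma~\ref{lem:BMalgP} giving $U(\tau)$ integer entries with phase $1$. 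You also correctly apply the descent twice — at the minimal time $\tau_0$, where Theorem~\ref{thm:BMalgCyclicP} supplies a primitive $\abs{\cS_a}$-th root of unity among the eigenvalues, and again at the given time $\tau$ to pin down $\tau$ itself. What the paper's route buys is brevity; what yours buys is a proof readable without access to \cite{lato2019quantum}, and it makes transparent exactly where the quadratic field, and hence the set $\{2,3,4,6\}$ and the special times $\frac{\pi}{\sqrt{3}}\mathbb{Q}$ and $\pi\mathbb{Q}$, come from.
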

\begin{proof}
Let $A=\sum_{r} \theta_r E_r$ be the spectral decomposition of $A$.  
Since $A\in \BM$, the eigenvalue support of $a$, $\Phi_a$, contains all eigenvalues of $A$.
By Theorem~6.1 of \cite{godsil2020perfect}, there exists a square-free positive integer $\Delta$ such that $\Phi_a \subset {\bb{Z}}(i\sqrt{\Delta})$.

From Theorem~\ref{thm:BMalgCyclicP}, $U(\tau)=P$ is a permutation matrix of order $m=\vert \cS_a\vert$.  For $r=0,\ldots,d$, $ PE_r = \omega_r E_r$ for some $m$-th root of unity $\omega_r$.  Since $P$ is a permutation matrix, it follows from \cite[Lemma~2.10.3]{lato2019quantum}  that $\omega_r \in \bb{Q}(i\sqrt{\Delta})$.  Hence $m \in \{2,3,4,6\}$.

{
Suppose $m\in \{3, 6\}$.  There exists $r$ such that $\omega_r$ is a primitive $m$-th root of unity, so $\Delta=3$.
It follows from $PE_r = \omega_rE_r$ and Theorem~2.10.3 of \cite{lato2019quantum} that $\sqrt{3}i \in \bb{Q}(\theta_r)$.  Hence $\theta_r \in \sqrt{3}i \bb{Z}$, and $\tau \in \frac{\pi}{\sqrt{3}} \bb{Q}$
by Corollary~6.1.3 of \cite{lato2019quantum}.

Similarly, when $m=4$, there exists $r$ such that $PE_r = i E_r$ which implies $i\in \bb{Q}(\theta_r)$.
Thus $\theta_r \in i\bb{Z}$ and $\tau \in \pi\bb{Q}$.
}
\end{proof}

\begin{cor}
    Let $X$ be an oriented graph whose adjacency matrix $A$ belongs to the Bose-Mesner algebra of some association scheme $\cA$.
    If PST occurs in $X$ at time $\tau$ then there exist $a_1, a_2, \ldots, a_s$ such that $V(X)$ is the disjoint union of $\cS_{a_1},\cdots, \cS_{a_s}$, and $\vert \cS_{a_j} \vert = \vert V(X)\vert /s$, for $j=1,\ldots,s$.
\end{cor}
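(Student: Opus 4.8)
The statement bundles two assertions: that the classes $\cS_{a_1},\dots,\cS_{a_s}$ exhaust $V(X)$, and that they all have the same cardinality. My plan is to dispatch the first using Lemma~\ref{lem:BMalgP} and the second by reducing to the global minimal PST time and invoking Theorem~\ref{thm:BMalgCyclicP}.

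For the covering assertion, I would apply Lemma~\ref{lem:BMalgP} to the hypothesised PST event at time $\tau$: it yields $U(\tau)=P$, a fixed-point-free permutation matrix in $\cA$ with phase $1$. The key point is that $P$ is a \emph{full} permutation matrix, so for \emph{every} vertex $v$ we have $U(\tau)v = Pv$ with $Pv\neq v$; thus PST occurs from $v$ to the distinct vertex $Pv$ at time $\tau$. Consequently every vertex is involved in PST, each $v$ is periodic with $v\in\cS_v$, and the equivalence classes of $\equiv$ partition all of $V(X)$. Listing them as $\cS_{a_1},\dots,\cS_{a_s}$ settles disjointness and exhaustion automatically.

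For equal cardinality, the naive move of reading off the cycle lengths of the given $P=U(\tau)$ on each class is exactly the step I expect to be the obstacle: $\tau$ need not be the minimal PST time within each class, so $P$ may act as a proper power of that class's full cycle and hence decompose into cycles whose lengths vary from class to class. The fix is to work at the smallest PST time of the whole graph. Let $\tau_a$ be the smallest time at which PST occurs from $a$, and set $\tau^\ast = \min_a \tau_a$, a minimum over finitely many positive reals and hence attained. Applying Lemma~\ref{lem:BMalgP} at $\tau^\ast$ gives a fixed-point-free permutation $P^\ast = U(\tau^\ast)$. For each vertex $a$, fixed-point-freeness forces $P^\ast a \neq a$, so PST occurs from $a$ to another vertex at $\tau^\ast$, whence $\tau_a \le \tau^\ast$; minimality of $\tau^\ast$ gives the reverse inequality, so $\tau^\ast = \tau_a$ \emph{simultaneously for every} $a$.

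Finally I would feed this into Theorem~\ref{thm:BMalgCyclicP}(b): since $\tau^\ast$ is the minimal PST time from $a$, the order of $P^\ast$ equals $\abs{\cS_a}$, and this holds for every $a$. As $P^\ast$ is a single matrix with a single order, all the $\abs{\cS_a}$ coincide; call the common value $n$ (this is precisely Theorem~\ref{thm:BMalgCyclicP}(c)). Combining with the partition, $\abs{V(X)} = \sum_{j=1}^s \abs{\cS_{a_j}} = sn$, so $\abs{\cS_{a_j}} = n = \abs{V(X)}/s$ for each $j$, as required.
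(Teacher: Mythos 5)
Your proof is correct, but its second half runs along a genuinely different line from the paper's. The paper argues through the cycle structure of the permutation matrix itself: with $P=U(\tau)\in\cA$ as in Theorem~\ref{thm:BMalgCyclicP}, any power of $P$ having a fixed point is a $01$ permutation matrix in the Bose--Mesner algebra and hence equals $I$, so all cycles of $P$ have the same length $m$; after reordering, $P$ is block diagonal with $m\times m$ cyclic blocks, and the classes $\cS_{a_j}$ are read off as the $P$-orbits. You never invoke this cycle-structure fact. Instead you first note (a point the paper leaves implicit) that fixed-point-freeness of $U(\tau)$ makes every vertex a participant in PST, so the $\equiv$-classes exhaust $V(X)$, and then you prove that the minimal PST time is one and the same number $\tau^\ast$ for all vertices, so that Theorem~\ref{thm:BMalgCyclicP}(b), applied vertex by vertex to the single matrix $U(\tau^\ast)$, forces all class sizes to equal its order. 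The trade-off: your route is longer but supplies rigor exactly where the paper is elliptical --- both the paper's closing line (``let $a_j$ be a vertex in the $j$-th block and the result follows'') and its proof of Theorem~\ref{thm:BMalgCyclicP}(c) tacitly require that the minimal PST time be vertex-independent, which is precisely what your global-minimum argument establishes; the paper's route is shorter and yields the concrete combinatorial picture of the partition as the orbit decomposition of a permutation matrix belonging to the scheme, but asks the reader to fill in that uniformity step themselves.
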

\begin{proof}
    Let $m$ be the order of the permutation matrix $P=U(\tau)$ as in the proof of Theorem~\ref{thm:BMalgCyclicP}.  Since $P \in \cA$, up to re-ordering of vertices, $P$ is a block diagonal matrix in which every block is a $m\times m$ permutation matrix of order $m$.   Let $a_j$ be a vertex in the $j$-th block and the result follows.
\end{proof}

\section{Oriented normal Cayley graphs}\label{sec:Cayley}
The Cayley digraph $\mrm{Cay}(G, C)$ on a group $G$ with a connection set $C$ is the graph with $G$ as its vertex set and $\{(g,\ cg)\ :\ c\in C\}$ as the set of its arcs. We note that $\mrm{Cay}(G, C)$ is oriented if and only if $C \cap C^{-1}=\emptyset$.

Let $\cC_0, \cC_1, \ldots,\cC_d$ be the conjugacy classes of $G$ with 
$\cC_0=\{e\}$.  For $j=0,\ldots,d$, let $A_j$ be the adjacency matrix of the Cayley graph $\mrm{Cay}(G,\cC_j)$.  Then $\cA=\{A_0, A_1,\ldots, A_d\}$ is an association scheme, called the {\it conjugacy class scheme of $G$}.  
A Cayley graph $\mrm{Cay}(G, C)$ is a {\it normal} Cayley graph if 
its connection set $C$ is a union of conjugacy classes of $G$. 
That is, its adjacency matrix belongs to $\BM$.

Suppose PST occurs between two vertices in an oriented normal Cayley graph $\mrm{Cay}(G, C)$.
By vertex transitivity, we may assume without loss of generality that PST occurs from the identity $e$ of $G$.  
Let $\tau >0$ be the minimum time at which PST occurs from $e$ to some vertex $z$.
By Lemma~\ref{lem:BMalgP}, $U(\tau)$ is the adjacency matrix of $\mrm{Cay}(G, \{z^{-1}\})$.
Since $U(\tau) \in \cA$, $\{z^{-1}\}$ is a conjugacy class of $G$ which implies $z$ is a central element of $G$.  It follows from Theorem~\ref{thm:BMalgCyclicP} and Corollary~\ref{cor:BMalgSize} that $\cS_e = \langle z\rangle$ and $z$ has order $2$, $3$, $4$ or $6$.

\begin{theorem}\label{thm:cayleysatset}
    If $\mrm{Cay}(G, C)$ is an oriented normal Cayley graph admitting PST, then $\cS_e$ is a cyclic subgroup of $Z(G)$ that has order $2$, $3$, $4$, or $6$.
\end{theorem}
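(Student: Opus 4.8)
The plan is to combine the general results of \S\ref{sec:BM} on permutation matrices in a Bose--Mesner algebra with the one feature special to the conjugacy class scheme: its permutation matrices come from singleton conjugacy classes, i.e.\ from central elements. Since $\mrm{Cay}(G,C)$ is vertex-transitive under the regular action of $G$, I would first use this symmetry to reduce to PST originating at the identity $e$, so that it suffices to describe $\cS_e$. Fix $\tau>0$ to be the minimum time at which PST occurs from $e$, say to a vertex $z$.

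Next I would apply Lemma~\ref{lem:BMalgP}: $U(\tau)$ is a permutation matrix lying in the conjugacy class scheme $\cA=\{A_0,\dots,A_d\}$, where $A_j$ is the adjacency matrix of $\mrm{Cay}(G,\cC_j)$, and moreover the phase satisfies $\lambda=1$, so $U(\tau)e=z$. The crucial observation is that each $A_j$ has constant row sum $k_j=\abs{\cC_j}$; a permutation matrix has all row sums equal to $1$, so $U(\tau)=A_j$ with $\abs{\cC_j}=1$. A singleton conjugacy class consists of a single central element, so $U(\tau)$ is the permutation matrix of multiplication by some $z\in Z(G)$, and the relation $U(\tau)e=z$ identifies this element with the target $z$. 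This already gives $z\in Z(G)$.

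I would then extract the structure of $\cS_e$ from Theorem~\ref{thm:BMalgCyclicP}. Part~(a) says $U(\tau)$ permutes $\cS_e$ as a single cycle starting at $e$; since this permutation is multiplication by $z$, the consecutive vertices on the cycle are $e, z, z^2, \dots$, whence $\cS_e=\{z^j : j\ge 0\}$. Part~(b) identifies $\abs{\cS_e}$ with the multiplicative order of $U(\tau)$, which, as multiplication by $z$, equals the order of $z$ in $G$; therefore $\cS_e=\langle z\rangle$ is a cyclic subgroup of $Z(G)$. Finally, Corollary~\ref{cor:BMalgSize} forces $\abs{\cS_e}\in\{2,3,4,6\}$, so $z$ --- and hence $\cS_e$ --- has order $2$, $3$, $4$ or $6$, as claimed.

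Most of the work is already carried out in \S\ref{sec:BM}, so I do not anticipate a serious obstacle. The point needing the most care is the identification $\cS_e=\langle z\rangle$: one must check that the cyclic permutation of part~(a), being multiplication by the fixed central element $z$, has cycle length simultaneously equal to $\abs{\cS_e}$ (by part~(b)) and to the order of $z$, so that the orbit $\{e,z,z^2,\dots\}$ exhausts $\langle z\rangle$ and nothing more.
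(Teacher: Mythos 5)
Your proposal is correct and follows essentially the same route as the paper: reduce to PST from $e$ by vertex transitivity, use Lemma~\ref{lem:BMalgP} to place $U(\tau)$ in the conjugacy class scheme as a permutation matrix (the paper identifies it directly as the adjacency matrix of $\mrm{Cay}(G,\{z^{-1}\})$, hence a singleton class, where you argue via row sums---the same point), and then invoke Theorem~\ref{thm:BMalgCyclicP} and Corollary~\ref{cor:BMalgSize} to get $\cS_e=\langle z\rangle$ with $\abs{\cS_e}\in\{2,3,4,6\}$. Your spelled-out verification that the cyclic action of multiplication by $z$ forces $\cS_e=\langle z\rangle$ is exactly the step the paper leaves implicit.
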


Given $\chi \in \irr(G)$, let $E_{\chi}$ denote the $G \times G$ matrix with $E_{\chi}(g,h)= \dfrac{\chi(hg^{-1})\chi(e)}{|G|}$. The matrix $E_{\chi}$ is an idempotent projector onto the $\chi$-isotypic subspace of the group algebra $\bb{C}[G]$.
Given a subset $S\subset G$, by $A_{S}$, denote the $G \times G$ matrix with $A_{S}(g,h)= \delta_{S}(hg^{-1})$, where $\delta_{S}$ denoting the indicator function of $S$ in $G$. In the case $S$ is a conjugacy class, $A_{S}$ satisfies the following decomposition as a linear combination of orthogonal idempotents.
\begin{equation}\label{eq:specdecomconjmat}
A_{S} = \sum\limits_{\chi \in \irr(G)} \dfrac{\chi(S)}{\chi(e)} E_{\chi}.
\end{equation}

Let $X:=\mrm{Cay}(G, C)$ be an oriented normal Cayley graph, that is, $C$ is a union of conjugacy classes and $C \cap C^{-1}=\emptyset$. We observe that $A:=A_{C}- A_{C^{-1}}$ is the adjacency matrix of $X$. Using \eqref{eq:specdecomconjmat}, we observe that the spectrum of $A$ is 
$\{\theta_{\chi}\ :\ \chi \in \irr(G)\}$, with $\theta_{\chi} := \dfrac{\chi(C)-\ov{\chi(C)}}{\chi(e)}$ for all $\chi \in \irr(G)$. We also have 
\begin{equation}\label{eq:idempdecom}
A = \sum\limits_{\chi \in \irr(G)} \theta_{\chi} E_{\chi}.
\end{equation} 
Recall that $U(t)=e^{tA}$ and thus, we have 
\begin{equation}\label{eq:idempdecomtmat}
U(t) = \sum\limits_{\chi \in \irr(G)} e^{t\theta_{\chi}} E_{\chi}.
\end{equation} 

\begin{lemma}\label{lem:PSTequiv}
In an oriented normal Cayley graph,
PST occurs from $a$ to $b$ at time $\tau$ if and only if PST occurs from $e$ to $ba^{-1}$.
Further, $ba^{-1}$ is a central element of $G$.
\end{lemma}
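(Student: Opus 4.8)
The plan is to exploit the right-regular action of $G$ on itself by graph automorphisms, which lets us slide any PST equation along the group. For the Cayley digraph with arc set $\{(g, cg): c\in C\}$, right multiplication $R_x\colon g\mapsto gx$ sends the arc $(g,cg)$ to $(gx, c(gx))$, so $R_x$ is an automorphism of $X$ for every $x\in G$ (normality of $C$ is not even needed for this). Writing $P_x$ for the permutation matrix of $R_x$, normalized so that $P_x e_g = e_{gx}$ on the canonical basis, the fact that $R_x$ is an automorphism means $P_x$ commutes with $A$, and hence with $U(t)=e^{tA}$ for every $t$. This commutation is the whole engine of the proof.

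Granting it, the equivalence is immediate. If PST occurs from $a$ to $b$ at time $\tau$, say $U(\tau)e_a=\lambda e_b$ with $\abs{\lambda}=1$, then applying $P_{a^{-1}}$ and commuting gives
\begin{equation*}
U(\tau)e_e = U(\tau)P_{a^{-1}}e_a = P_{a^{-1}}U(\tau)e_a = \lambda P_{a^{-1}}e_b = \lambda e_{ba^{-1}},
\end{equation*}
which is exactly PST from $e$ to $ba^{-1}$ at the same time $\tau$ and with the same phase $\lambda$. Conversely, if $U(\tau)e_e=\lambda e_{ba^{-1}}$, applying $P_a$ returns $U(\tau)e_a=\lambda e_b$. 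Since both directions use the one matrix $U(\tau)$, the time is preserved automatically.

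For the centrality claim I would simply re-run, now for the possibly non-minimal time $\tau$, the argument recorded just before Theorem~\ref{thm:cayleysatset}: as PST occurs from $e$ to the distinct vertex $ba^{-1}$, Lemma~\ref{lem:BMalgP} forces $U(\tau)\in\cA$ to be a fixed-point-free permutation matrix with a single $1$ per column, i.e. $U(\tau)=A_{\cC_j}$ for a conjugacy class $\cC_j$ with $k_j=1$. Evaluating on $e_e$ identifies $\cC_j=\{(ba^{-1})^{-1}\}$ as a singleton conjugacy class, whence $ba^{-1}\in Z(G)$. The only step that demands genuine care is the bookkeeping of left-versus-right conventions and the precise action of $P_x$ on the canonical vectors; once those are pinned down there is no real obstacle. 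As a self-contained alternative avoiding automorphisms, one can argue spectrally: every $U(\tau)$ lies in $\BM$ and so has the form $U(\tau)(g,h)=f_\tau(hg^{-1})$ for a class function $f_\tau$, and PST from $a$ to $b$ at time $\tau$ is equivalent to $f_\tau$ being supported solely at $ab^{-1}=(ba^{-1})^{-1}$ with value $\lambda$, a condition manifestly depending only on $ba^{-1}$.
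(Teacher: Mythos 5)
Your proof is correct and takes essentially the same route as the paper: the paper likewise uses the permutation matrix of the right-translation $g\mapsto ga^{-1}$, which commutes with $U(\tau)$, to slide the PST equation to the identity, and derives centrality from the Bose--Mesner structure (the singleton-conjugacy-class observation preceding Theorem~\ref{thm:cayleysatset}). Your rerun of that argument via Lemma~\ref{lem:BMalgP} at a possibly non-minimal time is, if anything, slightly more self-contained than the paper's citation of Theorem~\ref{thm:BMalgCyclicP}.
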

\begin{proof}
Let $P$ be the permutation matrix of the automorphism of $X$ that maps $g$ to $ga^{-1}$.
Then $PU(\tau)=U(\tau)P$, and $U(\tau)a=b$ if and only if $U(\tau)e=ba^{-1}$.  
By Theorem~\ref{thm:BMalgCyclicP}, $ba^{-1} \in \cS_e \subset Z(G)$.   
\end{proof}

\begin{theorem}\label{thm:characterization}
PST occurs  from $e$ to $z$ at time $\tau$ in an oriented normal Cayley graph $\mrm{Cay}(G, C)$ if and only if the following hold.
\begin{enumerate}
\item $z\in Z(G)$.
\item \label{Cond:chi}
For $\chi \in \irr(G)$,
\[\dfrac{\chi(z)}{\chi(e)}=  \exp\left({\tau\ \dfrac{\chi(C)-\ov{\chi(C)}}{\chi(e)}}\right).\]
\end{enumerate}
\end{theorem}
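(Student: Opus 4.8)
The plan is to reduce the occurrence of PST to a single matrix identity in the conjugacy class scheme $\cA$ and then read that identity off coordinatewise in the basis of primitive idempotents $\{E_\chi\}_{\chi\in\irr(G)}$.

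For the forward implication, suppose PST occurs from $e$ to $z$ at time $\tau$ (we treat the transfer case $z\neq e$; the periodic case $z=e$ is analogous, with $A_{\{e\}}=I$). By Lemma~\ref{lem:BMalgP} the phase is $1$ and $U(\tau)$ is a permutation matrix lying in $\cA$; exactly as in the discussion preceding Theorem~\ref{thm:cayleysatset}, the only nonzero entry in the $e$-column of $U(\tau)$ is the $(z,e)$-entry, so the relevant class of $\cA$ is the singleton $\{z^{-1}\}$. Hence $z\in Z(G)$, which is condition~(1), and $U(\tau)=A_{\{z^{-1}\}}$. For the converse, assuming condition~(1) makes $\{z^{-1}\}$ a conjugacy class, so $A_{\{z^{-1}\}}\in\cA$ is defined and is precisely the permutation matrix sending the basis vector $e$ to $z$. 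It then suffices to show that condition~(2) forces $U(\tau)=A_{\{z^{-1}\}}$, because the $e$-column of the latter is $z$, which is exactly PST from $e$ to $z$ with unit phase.

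The core of the argument is therefore the equivalence $U(\tau)=A_{\{z^{-1}\}}\Longleftrightarrow(2)$, obtained by comparing idempotent expansions. Equation~\eqref{eq:idempdecomtmat} gives $U(\tau)=\sum_{\chi}e^{\tau\theta_\chi}E_\chi$ with $\theta_\chi=(\chi(C)-\overline{\chi(C)})/\chi(e)$, while applying \eqref{eq:specdecomconjmat} to the singleton class yields $A_{\{z^{-1}\}}=\sum_\chi \frac{\chi(z^{-1})}{\chi(e)}E_\chi$. The $E_\chi$ are nonzero, pairwise orthogonal idempotents, hence linearly independent, so these two elements of $\BM$ coincide if and only if their coefficients agree for every $\chi$, i.e. $e^{\tau\theta_\chi}=\chi(z^{-1})/\chi(e)$ for all $\chi\in\irr(G)$. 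Since $z$ is central, every $\chi$ sends $z$ to a scalar root of unity, so $\chi(z)/\chi(e)$ is a root of unity and $\chi(z^{-1})/\chi(e)=\overline{\chi(z)/\chi(e)}$; combined with the fact that $\theta_\chi$ is purely imaginary, this puts the family of scalar equations above into the form of condition~(2), completing the equivalence and hence the proof.

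I expect the main difficulty to be bookkeeping rather than anything analytic: correctly matching the vertex $z$ reached by the walk with the class matrix representing $U(\tau)$, and carefully tracking the conjugation conventions built into $A_S$, $E_\chi$, and $\theta_\chi$, so that the coefficient comparison lands in the exact displayed form of condition~(2) (the distinction between $z$ and $z^{-1}$, and the corresponding sign of the exponent, lives entirely here). The two genuinely structural inputs—that $U(\tau)$ must be a single weight-one, hence central, class matrix, supplied by Lemma~\ref{lem:BMalgP}, and that equalities in $\BM$ can be tested coefficientwise against the orthogonal idempotents $E_\chi$—carry the real weight, after which only this routine verification remains.
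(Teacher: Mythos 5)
Your structural plan---replace the vector statement of PST by the matrix identity $U(\tau)=A_{\{z^{-1}\}}$ in the conjugacy class scheme, then compare coefficients against the linearly independent idempotents $E_\chi$---is legitimate, and is close in spirit to the paper's proof, which instead projects the single column equation $U(\tau)e=z$ onto isotypic components (giving $e^{\tau\theta_\chi}E_\chi e=E_\chi z$ for all $\chi$) and evaluates entries of $E_\chi$ directly; your explicit use of Lemma~\ref{lem:BMalgP} to pin the phase at $1$ is in fact more careful than the paper's write-up, which takes this tacitly. The genuine gap is your final step. Your coefficient comparison produces $e^{\tau\theta_\chi}=\chi(z^{-1})/\chi(e)$ for all $\chi\in\irr(G)$, and you assert that conjugation---using that $\theta_\chi$ is purely imaginary and $\chi(z^{-1})/\chi(e)=\ov{\chi(z)/\chi(e)}$---brings this to Condition~(\ref{Cond:chi}). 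It does not: conjugating both sides yields $e^{-\tau\theta_\chi}=\chi(z)/\chi(e)$, so inverting $z$ necessarily flips the sign of the exponent as well. What your argument establishes is Condition~(\ref{Cond:chi}) for the pair $(z^{-1},\tau)$, equivalently $(z,-\tau)$, not for $(z,\tau)$. These are genuinely different requirements whenever $z^2\neq e$---exactly the cases $|z|\in\{3,4,6\}$ of interest for MST---because if both held at the same $\tau$, then $\chi(z)=\chi(z^{-1})$ for every $\chi$, hence $z=z^{-1}$, since the central element $z$ is alone in its conjugacy class. So, as written, the proposal proves the characterization of PST from $e$ to $z^{-1}$ rather than the stated theorem, and no reindexing $\chi\mapsto\ov{\chi}$ or further conjugation closes the gap. (For the paper's later existence applications the two versions are interchangeable, since PST from $e$ to $z$ forces PST from $e$ to $z^{-1}$ at another time; but as a proof of the stated biconditional it is not what is claimed.)

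The mismatch has an identifiable source: the two displayed formulas you combine, \eqref{eq:idempdecomtmat} and \eqref{eq:specdecomconjmat}, are mutually inconsistent with the paper's definitions. With $A_S(g,h)=\delta_S(hg^{-1})$ and $E_\chi(g,h)=\chi(hg^{-1})\chi(e)/|G|$, a direct computation (the class sum of $S^{-1}$ acts on the module affording $\chi$ as the scalar $\chi(S^{-1})/\chi(e)$) gives $A_SE_\chi=\frac{\chi(S^{-1})}{\chi(e)}E_\chi$; that is, the expansion \eqref{eq:specdecomconjmat} should carry $\chi(S^{-1})$ rather than $\chi(S)$ (test it on $\mrm{Cay}(\bb{Z}_3,\{1\})$). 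The paper's own proof never invokes \eqref{eq:specdecomconjmat}: it evaluates the columns $E_\chi e$ and $E_\chi z$ entrywise from the definition of $E_\chi$ and uses centrality via $\chi(zg^{-1})/\chi(g^{-1})=\chi(z)/\chi(e)$, which is how it arrives at Condition~(\ref{Cond:chi}) in exactly the stated form. You instead quoted \eqref{eq:specdecomconjmat} verbatim for $S=\{z^{-1}\}$ and so inherited the inconsistency. The repair that aligns your argument with the theorem is to compute, from the definitions, the eigenvalue of the permutation matrix $A_{\{z^{-1}\}}$ on the image of $E_\chi$: since $z$ is central, $A_{\{z^{-1}\}}E_\chi=\frac{\chi(z)}{\chi(e)}E_\chi$, and then your coefficient comparison gives Condition~(\ref{Cond:chi}) on the nose, with no conjugation step needed. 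Your own closing remark---that the entire content of the bookkeeping lives in the distinction between $z$ and $z^{-1}$---was exactly right; it is the one step that cannot be waved through.
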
  
\begin{proof}
By Theorem~\ref{thm:BMalgCyclicP}, if PST occurs from $e$ to $z$ then $z\in Z(G)$.
From \eqref{eq:idempdecomtmat}, $U(\tau)e = z$ if and only if
\begin{equation*}
  e^{\tau \theta_{\chi}} E_{\chi}e = E_{\chi} z, \quad \text{for $\chi \in \irr(G)$} 
\end{equation*}
which is equivalent to
\begin{equation}\label{eq:chiCondition}
  e^{\tau \theta_{\chi}} \chi(g^{-1}) = \chi(zg^{-1}), \quad \text{for $\chi \in \irr(G)$ and $g\in G$.}
\end{equation}
Since $z \in Z(G)$, $z$ acts on every irreducible module as a scalar matrix
and for $g\in G$
\begin{equation*}
    \dfrac{\chi(zg^{-1})}{\chi(g^{-1})} = \dfrac{\chi(z)}{\chi(e)}.
\end{equation*}
Condition~(\ref{Cond:chi}) follows immediately from \eqref{eq:chiCondition}.
\end{proof}
From the above, it follows that if PST occurs at a time $\tau$ from $e$ to $z$, then PST occurs at the time $n\tau$ from $e$ to $z^n$. This helps us conclude the following result.
\begin{cor}\label{cor:multistate}
Let $G$ be a finite group, $X:=\mrm{Cay}(G, C)$ be a normal oriented Cayley graph, and $z$ be a central element satisfying Condition~(\ref{Cond:chi}) in Theorem~\ref{thm:characterization}. Then $X$ has multiple state transfer on the subgroup $\left\langle z \right\rangle$.
\end{cor}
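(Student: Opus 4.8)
The plan is to show that Condition~(\ref{Cond:chi}) propagates to all powers of $z$, so that $\langle z\rangle\subseteq\cS_e$, and then to read off multiple state transfer from the pairwise PST structure of $\cS_e$ recorded in the introduction.

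First I would apply Theorem~\ref{thm:characterization} directly: since $z\in Z(G)$ satisfies Condition~(\ref{Cond:chi}), PST occurs from $e$ to $z$ at the corresponding time $\tau$. The crux is the multiplicativity of the condition. Because $z$ is central, it acts on the irreducible module afforded by $\chi$ as the scalar $\chi(z)/\chi(e)$, whence $\chi(z^n)/\chi(e) = (\chi(z)/\chi(e))^n$ for every positive integer $n$. On the other side, $\exp(n\tau\,\theta_\chi) = (\exp(\tau\,\theta_\chi))^n$. Thus Condition~(\ref{Cond:chi}) for $z$ at time $\tau$ gives exactly Condition~(\ref{Cond:chi}) for $z^n$ at time $n\tau$, and Theorem~\ref{thm:characterization} (whose first hypothesis is automatic, as $z^n\in Z(G)$) then yields PST from $e$ to $z^n$ at time $n\tau$. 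This is precisely the observation already noted immediately before the statement.

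It follows that $z^n\in\cS_e$ for every $n$, so $\langle z\rangle\subseteq\cS_e$. By the discussion in the introduction — PST occurs between every pair of distinct vertices of $\cS_a$, a fact that also follows from the cyclic action established in Theorem~\ref{thm:BMalgCyclicP}(a) — restricting to the subset $\langle z\rangle$ gives PST between every pair of distinct elements of $\langle z\rangle$. In the case of interest, where $z$ has order at least $3$, we have $\abs{\langle z\rangle}\geq 3$, which is precisely the definition of multiple state transfer on $\langle z\rangle$.

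I do not expect a genuine obstacle here: the result is a bookkeeping consequence of Theorem~\ref{thm:characterization}. The one step needing care is the multiplicativity claim, namely that the scalar by which the central power $z^n$ acts on each irreducible module is the $n$-th power of the scalar for $z$, and that the exponential factor scales in step; once this is verified, membership $\langle z\rangle\subseteq\cS_e$ and the resulting pairwise PST are immediate. (It is worth flagging that the hypothesis genuinely used is $\abs{\langle z\rangle}\geq 3$, since a central $z$ of order $2$ satisfying Condition~(\ref{Cond:chi}) yields only ordinary PST.)
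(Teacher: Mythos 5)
Your proposal is correct and follows essentially the same route as the paper, which proves the corollary exactly by the observation (stated just before it) that Condition~(\ref{Cond:chi}) for $z$ at time $\tau$ implies Condition~(\ref{Cond:chi}) for $z^n$ at time $n\tau$, so that PST occurs from $e$ to every power of $z$ and hence between all pairs in $\langle z\rangle$. Your explicit verification of the multiplicativity via the central character, and your flag that order $|z|\geq 3$ is what makes this genuinely \emph{multiple} state transfer, are both consistent with (indeed slightly more careful than) the paper's one-line argument.
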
 

We end this section with a non-existence result. Before stating it, we recall some standard notation. Given a character $\chi$, let $\bb{Q}(\chi)$ denote the field generated 
by its values and let $\mrm{ker}(\chi)$ denote its kernel. 
\begin{cor}\label{cor:non-exist}
 Let $G$ be a finite group and $z \in Z(G)$ be such that there exists a subset $Y\subset \irr(G)$ satisfying:
 (i) $z \notin \mrm{ker}(\chi)$, for all $\chi \in Y$; and (ii) $\bigcap\limits_{\chi \in Y} \bb{Q}(\chi)= \bb{Q}$. Then no normal Cayley graph on $G$ admits multiple state transfer on $\left\langle z \right\rangle$. 
\end{cor}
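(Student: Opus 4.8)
The plan is to argue by contradiction using the characterization in Theorem~\ref{thm:characterization} together with hypothesis (ii). Suppose some oriented normal Cayley graph $\mrm{Cay}(G,C)$ admits MST on $\langle z\rangle$. Then $z\in\cS_e$, so PST occurs from $e$ to $z$ at some time $\tau>0$, and Theorem~\ref{thm:characterization} gives, for every $\chi\in\irr(G)$,
\[
\frac{\chi(z)}{\chi(e)}=\exp(\tau\,\theta_\chi),\qquad \theta_\chi:=\frac{\chi(C)-\ov{\chi(C)}}{\chi(e)}.
\]
I would base the whole argument on three structural observations about this identity. First, since $z$ is central of finite order, $\chi(z)/\chi(e)$ is a root of unity $\omega_\chi$, and $\omega_\chi=1$ precisely when $z\in\mrm{ker}(\chi)$. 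Second, since $C$ is a union of conjugacy classes, $\chi(C)=\sum_{c\in C}\chi(c)$ lies in the character field $\bb{Q}(\chi)$, which is stable under complex conjugation because $\ov{\chi(c)}=\chi(c^{-1})$; hence $\ov{\chi(C)}\in\bb{Q}(\chi)$ as well, and therefore $\theta_\chi\in\bb{Q}(\chi)$. Third, each $\theta_\chi$ is \emph{purely imaginary}, being $(\chi(C)-\ov{\chi(C)})$ divided by the positive integer $\chi(e)$.

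Next I would bring in hypothesis (i). For $\chi\in Y$ we have $z\notin\mrm{ker}(\chi)$, so $\omega_\chi\neq 1$, which forces $\exp(\tau\theta_\chi)\neq 1$ and hence $\theta_\chi\neq 0$. Moreover, because $\omega_\chi$ is a root of unity, $\tau\theta_\chi$ is a \emph{nonzero} rational multiple of $2\pi i$. The key consequence is that for any two $\chi,\psi\in Y$ the ratio $\theta_\chi/\theta_\psi=(\tau\theta_\chi)/(\tau\theta_\psi)$ is a quotient of two such rational multiples of $2\pi i$, hence lies in $\bb{Q}^{\times}$.

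The heart of the proof is then a descent into the intersection field. Fixing some $\chi_0\in Y$, for every $\chi\in Y$ I would write $\theta_{\chi_0}=(\theta_{\chi_0}/\theta_{\chi})\,\theta_{\chi}$, where the ratio lies in $\bb{Q}^{\times}$ and $\theta_{\chi}\in\bb{Q}(\chi)$; this places $\theta_{\chi_0}\in\bb{Q}(\chi)$. Since this holds for all $\chi\in Y$, hypothesis (ii) gives $\theta_{\chi_0}\in\bigcap_{\chi\in Y}\bb{Q}(\chi)=\bb{Q}$. But $\theta_{\chi_0}$ is a nonzero purely imaginary number, so it cannot be rational, and we reach the desired contradiction.

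I do not expect a single hard obstacle; rather, the delicate points are the two places where the hypotheses are genuinely used. The membership $\theta_\chi\in\bb{Q}(\chi)$ requires the (elementary but essential) fact that the character field is closed under complex conjugation, and the nonvanishing $\theta_\chi\neq 0$ for $\chi\in Y$ is exactly what hypothesis (i) buys us: a character $\chi$ with $z\in\mrm{ker}(\chi)$ would contribute $\theta_\chi=0$ and destroy the rational-ratio step. Once the roots-of-unity observation has been used to pin the exponents $\tau\theta_\chi$ inside $2\pi i\,\bb{Q}$, the remainder is formal.
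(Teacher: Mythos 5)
Your proof is correct, and its skeleton matches the paper's: assume MST on $\langle z\rangle$, extract PST from $e$ to $z$ at some time $\tau>0$, apply Theorem~\ref{thm:characterization}, use hypothesis (i) to get $\theta_\chi\neq 0$ for $\chi\in Y$, show the ratios $\theta_\chi/\theta_\psi$ are rational, conclude from $\theta_\chi\in\bb{Q}(\chi)$ and hypothesis (ii) that $\theta_\chi\in\bb{Q}$, and contradict the fact that $\theta_\chi$ is nonzero and purely imaginary. The genuine difference is how the rationality of the ratios is obtained. The paper imports it from the literature: by \cite[Theorem~7.1]{godsil2020perfect} the vertex $e$ is periodic, and then \cite[Theorem~6.1]{godsil2020perfect} yields $\theta_\chi/\theta_\psi\in\bb{Q}$ for all $\chi,\psi\in Y$. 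You prove it from scratch: since $z$ is central of finite order, $\chi(z)/\chi(e)$ is a root of unity, and since $\tau\theta_\chi$ is purely imaginary, the identity $\chi(z)/\chi(e)=\exp(\tau\theta_\chi)$ pins $\tau\theta_\chi$ inside $2\pi i\,\bb{Q}$, nonzero for $\chi\in Y$ by (i); hence any two of these exponents, and therefore any two of the $\theta_\chi$, have rational ratio. Your route is more elementary and self-contained (it never invokes periodicity), and as a bonus you justify the membership $\theta_\chi\in\bb{Q}(\chi)$ --- which the paper uses without comment --- by noting that the character field is stable under complex conjugation because $\overline{\chi(g)}=\chi(g^{-1})$. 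What the paper's route buys is brevity and scope: the quoted eigenvalue-ratio theorem applies to any periodic vertex of any oriented graph, with no centrality or root-of-unity structure available, whereas your argument exploits the special algebraic shape of normal Cayley graphs; but since that is exactly the setting of the corollary, nothing is lost here.
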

\begin{proof}
By Corollary~\ref{cor:multistate}, it suffices to show that there is no normal Cayley graph on $G$ admits PST from $e$ to $z$. We assume the contrary, that is, there is a normal Cayley graph $\mrm{Cay}(G,C)$ that has PST from $e$ to $z$, at a time $\tau$.  Using Condition~(\ref{Cond:chi}) of Theorem~\ref{thm:characterization}, it follows that
\begin{subequations}
    \begin{align*}
        \dfrac{\chi(z)}{\chi(e)} &=e^{\tau \theta_{\chi}},
    \end{align*}
\end{subequations}
for all $\chi \in Y$.
Using (i) of the hypothesis, since $z\notin \mrm{ker}(\chi)$, for all $\chi \in Y$, it follows from the above equation that $\theta_{\chi}\neq 0$ for all $\chi \in Y$.  By \cite[Theorem 7.1]{godsil2020perfect}, the vertex $e$ is periodic, and thus by \cite[Theorem~6.1]{godsil2020perfect}, it follows that for all $\chi,\psi \in Y$, we have, $\theta_{\chi}/\theta_{\psi} \in \bb{Q}$. Thus, since $\theta_{\psi}\in \bb{Q}(\psi)$,  we have $\theta_{\chi} \in \bigcap\limits_{\psi \in Y} \bb{Q}(\psi) =\bb{Q}$. Since $\theta_{\chi}$ is purely imaginary, it follows that $\theta_{\chi}=0$, which is a contradiction.  
\end{proof}


\section{Oriented normal Cayley graphs on solvable groups}\label{sec:solvable}
We now show that $\vert \cS_e \vert \neq 6$ in oriented normal Cayley graphs on solvable groups.
\begin{theorem}\label{thm:solvableMST}
    If $G$ is a solvable group, $\mrm{Cay}(G, C)$ is an oriented normal Cayley graph admitting PST, then $\cS_e$ is a cyclic subgroup of $Z(G)$ that has order either $2$, or $3$, or $4$.
\end{theorem}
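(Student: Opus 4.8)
The plan is to argue by contradiction. By Theorem~\ref{thm:cayleysatset} and Corollary~\ref{cor:BMalgSize}, if $\mrm{Cay}(G,C)$ admits PST then $\cS_e=\langle z\rangle$ for some central $z$ whose order lies in $\{2,3,4,6\}$; I would suppose that order is $6$ and write $z=z_2z_3$ with $z_2\in Z(G)$ of order $2$ and $z_3\in Z(G)$ of order $3$. The goal is to exhibit a set $Y\subset\irr(G)$ satisfying the two hypotheses of Corollary~\ref{cor:non-exist}: then no normal Cayley graph on $G$ has MST on $\langle z\rangle$, which contradicts $\abs{\cS_e}=6$ (a set of size $\geq 3$ carrying PST is exactly MST). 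Since $G$ is solvable it is $p$-separable for every prime $p$, so Gajendragadkar's $p$-special characters are available, and I would take $Y=\{\alpha,\beta\}$ with $\alpha$ a $2$-special and $\beta$ a $3$-special character of $G$. For $g\in Z(G)$ write $\omega_\chi(g)=\chi(g)/\chi(e)$ for the scalar by which $g$ acts on an irreducible module affording $\chi$.

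First I would record how a $p$-special character detects $z$. A $p$-special character $\psi$ has $\psi(e)$ a power of $p$ and $\det\psi$ of $p$-power order. Evaluating the linear character $\det\psi$ on the central $p'$-part of $z$ (namely $z_3$ when $p=2$, and $z_2$ when $p=3$), and using $\det(\psi(g))=\omega_\psi(g)^{\psi(e)}$ for central $g$, the coprimality of $\psi(e)$ with that element's order forces it to act trivially. Concretely, for the $2$-special $\alpha$ one gets $\omega_\alpha(z_3)=1$, hence $\omega_\alpha(z)=\omega_\alpha(z_2)\in\{\pm1\}$; for the $3$-special $\beta$ one gets $\omega_\beta(z_2)=1$, hence $\omega_\beta(z)=\omega_\beta(z_3)$. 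Thus $z\notin\ker\alpha$ exactly when $\omega_\alpha(z_2)=-1$, and $z\notin\ker\beta$ exactly when $\omega_\beta(z_3)\neq1$.

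The second ingredient is the field of values: a $p$-special character is fixed by every Galois automorphism fixing all $p$-power roots of unity, so all its values lie in a cyclotomic field $\bb{Q}(\zeta_{p^a})$ of $p$-power conductor. Granting the existence of $\alpha,\beta$ with the central values above, this gives $\bb{Q}(\alpha)\subseteq\bb{Q}(\zeta_{2^a})$ and $\bb{Q}(\beta)\subseteq\bb{Q}(\zeta_{3^b})$; as the two conductors are coprime, $\bb{Q}(\alpha)\cap\bb{Q}(\beta)=\bb{Q}$. Combined with $z\notin\ker\alpha$ and $z\notin\ker\beta$ from the previous step, the set $Y=\{\alpha,\beta\}$ meets both hypotheses of Corollary~\ref{cor:non-exist}, which completes the contradiction and shows $\abs{\cS_e}\in\{2,3,4\}$.

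The part I expect to be the real obstacle is the \emph{existence} of these two characters: a $2$-special $\alpha$ with $\omega_\alpha(z_2)=-1$ and a $3$-special $\beta$ with $\omega_\beta(z_3)\neq1$. Equivalently, one must show that a nontrivial linear character of a central $p$-subgroup is the central character of some $p$-special character of $G$. A general irreducible character need not factor into special pieces, so I cannot simply take a character detecting $z_2$ (or $z_3$) and split off its special part; instead this existence should be established by induction on $\abs{G}$ along a chief series, treating chief factors that are $p$-groups and $p'$-groups separately via Clifford theory, or quoted directly from Gajendragadkar's construction. Everything else reduces to the determinant bookkeeping of the second paragraph and the coprimality of cyclotomic conductors.
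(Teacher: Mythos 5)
Your proposal is correct and takes essentially the same route as the paper: argue by contradiction, apply Corollary~\ref{cor:non-exist} with $Y$ consisting of a $2$-special and a $3$-special character, and use the fact that their fields of values lie in cyclotomic fields of coprime ($2$-power and $3$-power) conductor. The existence you flag as the real obstacle is exactly what the paper quotes from Gajendragadkar's Corollary~4.8 (a $p$-special character whose restriction to the central subgroup $Z$ is a multiple of a prescribed character of $p$-power order), and the field-of-values containment is his Proposition~6.3(a); your determinant bookkeeping is a correct, slightly more explicit substitute for the paper's direct use of that restriction statement.
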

\begin{proof}
  Assume the contrary. Then, by Theorem~\ref{thm:cayleysatset}, there is a
  solvable group $G$ of order $6n$ and $C \subset G$ such that $\mrm{Cay}(G, C)$ is an oriented normal Cayley graph, with $\cS_{e}$ equal to a cyclic subgroup $Z$  of order $6$ in $Z(G)$.

  We shall apply the theory of $p$-special characters, as developed (in far greater generality) in \cite{Gajendragadkar}, for the primes $p=2$ and $3$. For this purpose we recall some definitions. Given a character $\phi$ of a group $H$, we define
  $\det(\phi)(h)$ to be the determinant of $T(h)$, where $T$ is
  any matrix representation affording $\phi$. This is a well-defined linear character of $H$.  Let $p$ be a prime. An irreducible
  character $\chi$ is {\it $p$-special} if $\chi(1)$ is a power of $p$
  and the following further conditions hold: for every subnormal subgroup
  $H$ of $G$ and every irreducible constituent $\phi$ of $\chi_H$,
  the order of $\det(\phi)$ is a power of $p$.

  Let $\alpha$ and $\beta$ be characters of $Z$ of order $2$ and $3$
  respectively, By \cite[Cor. 4.8]{Gajendragadkar}, $G$ has a $2$-special
  irreducible character $\chi$ such that $\chi_Z$ is a multiple of $\alpha$.
  Likewise, $G$ has a $3$-special irreducible character $\psi$ such that
  $\psi_Z$ is a multiple of $\beta$. Hence
  by Condition~(\ref{Cond:chi}) in Theorem~\ref{thm:characterization} we have $\theta_\chi\neq 0$ and $\theta_\psi\neq 0$.
  
  By \cite[Prop. 6.3(a)]{Gajendragadkar},  $\mathbb Q(\chi)$ lies in
  a cyclotomic field of order equal to the  $2$-power in the exponent of $G$,
  and $\mathbb Q(\psi)$ lies in
  a cyclotomic field of order equal to the  $3$-power in the exponent of $G$. The intersection of these
  cyclotomic fields is $\mathbb Q$. Hence Corollary~\ref{cor:non-exist} applies.
\end{proof}

\section{Oriented Cayley graphs on finite abelian groups}\label{sec:abelian}

In this section, we consider PST
in oriented normal Cayley graphs on finite abelian groups $\bb{Z}_r^n$.  By Theorem~\ref{thm:cayleysatset} and Theorem~\ref{thm:solvableMST}, if there is PST
we must have $\abs{\cS_e}\in \{2,3,4\}$. As Cayley graphs in the groups $\bb{Z}_2^n$ are not oriented,
it is natural to first consider the groups ${\mathbb Z_3}^n$ and ${\mathbb Z_4}^n$ . However, we point out that MST is known to exist for other abelian groups. For example, $\mrm{Cay}(\mathbb Z_8,\{1,2,5\})$ has MST on a set of size 4. (See \cite[10.2. Example 6]{godsil2024oriented}.)

We consider the elements of this group to be column vectors of length $n$, with entries in $\bb{Z}_{r}$. The characters of $\bb{Z}_{r}^{n}$ are indexed by its elements and are of the form $\chi_{v}: \bb{Z}_{r}^{n} \to \bb{C}^{\times}$ (for some $v\in \bb{Z}_{r}^{n}$) satisfying 
\begin{equation*}
    \chi_{v}(w)= \exp\left( \dfrac{2\pi i v^{t}w}{r} \right), \quad \text{for all $w \in \bb{Z}_{r}^{n}$.}  
\end{equation*}

Our results for ${\mathbb Z_3}^n$ and ${\mathbb Z_4}^n$
are inspired by the following result from \cite{CheungGodsil} about undirected Cayley graphs on  ${\mathbb Z_2}^n$.
\begin{theorem}
For $C \subset \bb{Z}_2^n \backslash \{\mbf{0}\}$, let $\sigma$ be the sum of the elements in $C$.  Then 
\begin{equation*}
    U\left(\frac{\pi}{2}\right) \mbf{0} = \sigma.
\end{equation*}
That is, PST occurs  in $\mrm{Cay}(\bb{Z}_2^n, C)$ at time $\frac{\pi}{2}$ if and only if $\sigma \neq \mbf{0}$.
\end{theorem}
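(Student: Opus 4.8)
The plan is to diagonalize $A=A_C$ using the characters of $\bb{Z}_2^n$. Since the group is abelian, every irreducible character is linear, namely $\chi_v(w)=(-1)^{v^t w}$, and the associated idempotent $E_v$ has entries $(E_v)_{g,h}=2^{-n}(-1)^{v^t(h-g)}$. Because $\bb{Z}_2^n$ has exponent $2$, the set $C$ is automatically symmetric and the (undirected) adjacency matrix $A_C$ is real symmetric; its eigenvalue on the $\chi_v$-eigenspace is $\theta_v=\sum_{c\in C}(-1)^{v^t c}$. Using the standard transition matrix $U(t)=e^{-itA}$ for an undirected walk, I would write $U\!\left(\tfrac{\pi}{2}\right)=\sum_{v\in\bb{Z}_2^n} e^{-i(\pi/2)\theta_v}E_v$.

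Next I would extract the column $U\!\left(\tfrac{\pi}{2}\right)\mbf{0}$ coordinatewise. Its $w$-entry is $2^{-n}\sum_v e^{-i(\pi/2)\theta_v}(-1)^{v^t w}$. Setting $k=\abs{C}$ and letting $m_v=\abs{\{c\in C:\ v^t c=1\}}$, one has $\theta_v=k-2m_v$, so that $e^{-i(\pi/2)\theta_v}=e^{-i\pi k/2}(-1)^{m_v}$. The crux of the argument is the parity identity $m_v\equiv\sum_{c\in C}v^t c=v^t\sigma\pmod 2$, which replaces $(-1)^{m_v}$ by $(-1)^{v^t\sigma}$. After substituting, the $w$-entry becomes $e^{-i\pi k/2}\,2^{-n}\sum_v(-1)^{v^t(\sigma+w)}$, and orthogonality of the characters of $\bb{Z}_2^n$ collapses this sum to $2^n$ when $w=\sigma$ and to $0$ otherwise. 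Hence $U\!\left(\tfrac{\pi}{2}\right)\mbf{0}=e^{-i\pi k/2}\,\sigma$, a phase of modulus $1$ times the vertex $\sigma$.

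Finally I would read off the stated dichotomy. When $\sigma\neq\mbf{0}$ this identity exhibits PST from $\mbf{0}$ to the distinct vertex $\sigma$ at time $\tfrac{\pi}{2}$ with phase $e^{-i\pi k/2}$; when $\sigma=\mbf{0}$ the same computation yields $U\!\left(\tfrac{\pi}{2}\right)\mbf{0}=e^{-i\pi k/2}\mbf{0}$, so $\mbf{0}$ is merely periodic at this time and no transfer to another vertex occurs. The only step requiring genuine care is the parity identity $m_v\equiv v^t\sigma$; once that is in hand the rest is the routine abelian character computation together with orthogonality, so that is where I would expect the real content of the proof to lie.
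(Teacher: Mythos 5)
Your proof is correct. Note that the paper does not prove this statement itself; it is quoted from \cite{CheungGodsil} as background, and the paper's own work begins with the analogous results for $\mathbb{Z}_3^n$ and $\mathbb{Z}_4^n$, which are proved by verifying the character condition of Theorem~\ref{thm:characterization}. Your argument is the classical one and amounts to the same spectral computation, carried out by hand rather than routed through a characterization theorem: expand $U(\pi/2)$ in the character idempotents, rewrite $e^{-i(\pi/2)\theta_v}$ using $\theta_v=|C|-2m_v$ together with the parity identity $m_v\equiv v^t\sigma \pmod{2}$ (the exact $r=2$ analogue of the paper's bookkeeping with the quantities $n_{v,j}$), and collapse the column by orthogonality of characters. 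Doing it by hand is in fact the right choice here, since Theorem~\ref{thm:characterization} is stated only for \emph{oriented} normal Cayley graphs (skew-symmetric adjacency matrix, $U(t)=e^{tA}$, phase forced to equal $1$), whereas $\mathrm{Cay}(\mathbb{Z}_2^n,C)$ is undirected, so that theorem does not literally apply. A further merit of your computation is that it produces the phase explicitly: $U(\pi/2)\mathbf{0}=(-i)^{|C|}\sigma$, so the paper's literal equality $U(\pi/2)\mathbf{0}=\sigma$ holds only up to a unimodular phase (exactly when $4$ divides $|C|$); since PST is defined up to such a phase, the stated equivalence with $\sigma\neq\mathbf{0}$ is unaffected, and your form agrees with the original statement in \cite{CheungGodsil}.
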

We extend the above result to $\mrm{Cay}\left(\bb{Z}_r^n, C\right)$ for $r=3, 4$.
For $v\in \bb{Z}_r^n$ and $j=0,\ldots,r-1$, let
\begin{equation*}
    n_{v,j} = \left\vert \left\{w \in C \ :\ v^Tw = j \pmod{r}\right\}\right\vert.
\end{equation*}
Then
\begin{equation}\label{eq:theta_chi}
  \chi_{v}(C)-\ov{\chi_{v}(C)} =\sum\limits_{ w \in C} \left(\exp\left( \dfrac{2\pi i v^{t}w}{r} \right)- \exp\left(-\dfrac{2\pi i v^{t}w}{r}\right) \right)= 2 i \sum_{j=1}^{r-1}  n_{v,j} \sin\left(\dfrac{2j \pi}{r}\right).  
\end{equation}
\begin{theorem}
Let $C \subset \bb{Z}_{3}^{n} \backslash \{\mbf{0}\}$ be a connection set with $C \cap C^{-1}=\emptyset$ and $\sigma:=\sum\limits_{w \in C} w$. 
\begin{equation*}
    U\left(\frac{2\pi}{3\sqrt{3}}\right) \mbf{0} = \sigma.
\end{equation*}
That is, PST occurs in $\mrm{Cay}(\bb{Z}_3^n, C)$ from $\mbf{0}$ to $\sigma$ at time $\frac{2\pi}{3\sqrt{3}}$ if and only if $\sigma \neq \mbf{0}$. 
Moreover, if $\sigma \neq \mbf{0}$, then $\mrm{Cay}(\bb{Z}_3^n, C)$ has MST on $\langle \sigma\rangle$.
\end{theorem}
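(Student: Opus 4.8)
The plan is to verify the characterization in Theorem~\ref{thm:characterization} directly for the proposed central element $z=\sigma$ and time $\tau=\frac{2\pi}{3\sqrt 3}$, and then invoke Corollary~\ref{cor:multistate} for the MST claim. Since $G=\bb{Z}_3^n$ is abelian, every element is central, so Condition~(1) is automatic. The work is entirely in Condition~(\ref{Cond:chi}), which for the linear character $\chi_v$ (with $\chi_v(e)=1$) reads
\begin{equation*}
\chi_v(\sigma) = \exp\bigl(\tau\,(\chi_v(C)-\ov{\chi_v(C)})\bigr).
\end{equation*}

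First I would compute both sides in terms of the counts $n_{v,j}$ for $j\in\{1,2\}$. On the left, $\chi_v(\sigma)=\exp\!\bigl(\frac{2\pi i}{3}\,v^T\sigma\bigr)$, and since $v^T\sigma=\sum_{w\in C}v^Tw\equiv n_{v,1}+2n_{v,2}\pmod 3$, the left side is a cube root of unity determined by $n_{v,1}+2n_{v,2}\bmod 3$. On the right, I use \eqref{eq:theta_chi} with $r=3$: because $\sin(2\pi/3)=\frac{\sqrt3}{2}$ and $\sin(4\pi/3)=-\frac{\sqrt3}{2}$, the exponent simplifies to
\begin{equation*}
\tau\cdot 2i\cdot\frac{\sqrt3}{2}\,(n_{v,1}-n_{v,2}) = \frac{2\pi}{3\sqrt3}\cdot i\sqrt3\,(n_{v,1}-n_{v,2}) = \frac{2\pi i}{3}\,(n_{v,1}-n_{v,2}).
\end{equation*}
So the right side is $\exp\!\bigl(\frac{2\pi i}{3}(n_{v,1}-n_{v,2})\bigr)$. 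The identity then reduces to the congruence $n_{v,1}+2n_{v,2}\equiv n_{v,1}-n_{v,2}\pmod 3$, i.e.\ $3n_{v,2}\equiv 0\pmod3$, which holds for every $v$. This establishes $U(\tau)\mbf0=\sigma$ unconditionally, proving the displayed equation.

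With the displayed equation in hand, the ``if and only if'' for PST follows from the definition: $U(\tau)\mbf0=\sigma$ is PST from $\mbf0$ precisely when $\sigma\neq\mbf0$ (otherwise $\mbf0$ is merely periodic, not transferred to a distinct vertex). For the MST conclusion, I would note that $\sigma\neq\mbf0$ means $\sigma$ has order $3$ in $\bb{Z}_3^n$, so $\langle\sigma\rangle$ is a subgroup of order $3\ge 3$; since Condition~(\ref{Cond:chi}) of Theorem~\ref{thm:characterization} is exactly what Corollary~\ref{cor:multistate} requires, MST occurs on $\langle\sigma\rangle$. The only genuine point requiring care, and the step I expect to be the main (if modest) obstacle, is the clean cancellation of $\sqrt3$ between the chosen $\tau$ and the value of $\sin(2\pi/3)$; this is precisely the arithmetic that forces the specific constant $\frac{2\pi}{3\sqrt3}$ and is why the $\abs{\cS_e}=3$ case lands in $\frac{\pi}{\sqrt3}\bb{Q}$ as predicted by Corollary~\ref{cor:BMalgSize}. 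Everything else is bookkeeping with the residues $n_{v,j}\bmod 3$.
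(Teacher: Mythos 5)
Your proposal is correct and follows essentially the same route as the paper: both verify Condition~(2) of Theorem~\ref{thm:characterization} for $z=\sigma$ at $\tau=\frac{2\pi}{3\sqrt{3}}$ by computing $\chi_v(C)-\ov{\chi_v(C)}=\sqrt{3}i\,(n_{v,1}-n_{v,2})$ and $\chi_v(\sigma)=\exp\bigl(\tfrac{2\pi i}{3}(n_{v,1}+2n_{v,2})\bigr)$, matching them via the congruence $n_{v,1}+2n_{v,2}\equiv n_{v,1}-n_{v,2}\pmod 3$, and then invoking Corollary~\ref{cor:multistate} for MST. Your write-up merely makes explicit the mod-$3$ cancellation that the paper leaves implicit.
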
  
\begin{proof}
From \eqref{eq:theta_chi}, we have
\begin{align*}
\chi_{v}(C)-\ov{\chi_{v}(C)} =  \left( n_{v,1}-n_{v,2} \right) \sqrt{3} i,
\end{align*}
and
\begin{align*}
\chi_{v}(\sigma)  =\exp\left( \dfrac{2\pi i}{3} \sum\limits_{w \in C } v^{t}w \right) 
 = \exp\left( \dfrac{2\pi i \left( n_{v,1}+2n_{v,2} \right) } {3} \right). 
\end{align*}
The above two equalities imply that
\[\chi_{v}(\sigma) = \exp\left( \dfrac{2\pi}{3\sqrt{3}} \left(\chi_{v}(C)-\ov{\chi_{v}(C)}\right) \right),\] 
for all $v \in \bb{Z}_{3}^{n}$. Now, the result follows from Theorem~\ref{thm:characterization} and Corollary~\ref{cor:multistate}. 
\end{proof}

The situation for $\bb{Z}_4^n$ is a little different.
The next two results show that we may have PST between two elements, but there cannot be MST on a set of size 4.

\begin{theorem}
Let $C \subset \bb{Z}_{4}^{n} \backslash \{\mbf{0}\}$ be a connection set with $C \cap C^{-1}=\emptyset$ and $\sigma:=\sum\limits_{w \in C} w$. \begin{equation*}
    U\left(\frac{\pi}{2}\right) \mbf{0} = 2\sigma.
\end{equation*}
That is, PST occurs in $\mrm{Cay}(\bb{Z}_4^n, C)$ from $\mbf{0}$ to $2\sigma$ at time $\frac{\pi}{2}$ if and only if $\sigma$ has order 4. 
\end{theorem}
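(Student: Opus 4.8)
The plan is to apply the character-theoretic characterization of Theorem~\ref{thm:characterization} directly. Since $\bb{Z}_4^n$ is abelian, every element is central, so condition (1) of that theorem is automatic, every irreducible character has degree $1$, and the eigenvalue attached to $\chi_v$ is simply $\theta_{\chi_v}=\chi_v(C)-\ov{\chi_v(C)}$. Thus the whole statement reduces to verifying condition~(\ref{Cond:chi}) for $z=2\sigma$ and $\tau=\pi/2$, i.e.\ that $\chi_v(2\sigma)=\exp\!\big(\tfrac{\pi}{2}(\chi_v(C)-\ov{\chi_v(C)})\big)$ for every $v\in\bb{Z}_4^n$.

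First I would compute the eigenvalue by specializing \eqref{eq:theta_chi} to $r=4$. Because $\sin(\pi)=0$ the middle term drops out, and $\sin(\pi/2)=1$, $\sin(3\pi/2)=-1$ give $\chi_v(C)-\ov{\chi_v(C)}=2i\,(n_{v,1}-n_{v,3})$. Next I would compute the left-hand side: $\chi_v(2\sigma)=\exp(\pi i\,v^{t}\sigma)$, and since $v^{t}\sigma\equiv n_{v,1}+2n_{v,2}+3n_{v,3}\pmod 4$ this equals $(-1)^{n_{v,1}+n_{v,3}}$ once reduced modulo $2$. On the other side, $\exp\!\big(\tfrac{\pi}{2}\cdot 2i(n_{v,1}-n_{v,3})\big)=(-1)^{n_{v,1}-n_{v,3}}$, and the two exponents agree modulo $2$. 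Hence both sides equal $(-1)^{n_{v,1}+n_{v,3}}$ for every $v$, so condition~(\ref{Cond:chi}) holds and Theorem~\ref{thm:characterization} yields the identity $U(\pi/2)\mbf{0}=2\sigma$ unconditionally.

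Finally I would interpret the identity as a PST statement. The equation $U(\pi/2)\mbf{0}=2\sigma$ always holds, but transfer to a genuinely distinct vertex requires $2\sigma\neq\mbf{0}$ in $\bb{Z}_4^n$. The element $2\sigma$ is zero exactly when every coordinate of $\sigma$ is even (lies in $\{0,2\}$), which is precisely when $\sigma$ has order $1$ or $2$; conversely $2\sigma\neq\mbf{0}$ iff some coordinate of $\sigma$ is odd, i.e.\ iff $\sigma$ has order $4$. This gives the stated equivalence.

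I expect the computation itself to be routine; the only genuinely delicate point is the bookkeeping modulo $2$ versus modulo $4$---one must note that the eigenvalue sees $n_{v,1}-n_{v,3}$ while $\chi_v(2\sigma)$ a priori involves $n_{v,1}+2n_{v,2}+3n_{v,3}\pmod 4$, and check that after the substitution $\tau=\pi/2$ both collapse to the same sign $(-1)^{n_{v,1}+n_{v,3}}$ simultaneously for \emph{all} $v$ (not merely on the eigenvalue support of $\mbf{0}$). The secondary subtlety is the clean translation of ``$2\sigma\neq\mbf{0}$'' into the order condition on $\sigma$, which is the source of the phrasing ``$\sigma$ has order $4$.''
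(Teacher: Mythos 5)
Your proposal is correct and takes essentially the same route as the paper: both specialize \eqref{eq:theta_chi} to $r=4$ to get $\theta_{\chi_v}=2i(n_{v,1}-n_{v,3})$, evaluate $\chi_v(2\sigma)$, observe that both sides of condition~(\ref{Cond:chi}) collapse to the sign $(-1)^{n_{v,1}+n_{v,3}}$, and invoke Theorem~\ref{thm:characterization}. The only difference is that you make explicit the mod-$2$ bookkeeping and the translation of $2\sigma\neq\mbf{0}$ into ``$\sigma$ has order $4$,'' steps the paper leaves implicit.
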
  
\begin{proof}
We have
\begin{align*}
\chi_{v}(C)-\ov{\chi_{v}(C)} =  2i\left( n_{v,1}-n_{v,3} \right) ,
\end{align*}
and
\begin{align*}
\chi_{v}(2\sigma)  =\exp\left( \dfrac{4\pi i}{4} \sum\limits_{w \in C } v^{t}w \right) 
 = \exp\left( \pi i \left( n_{v,1}+3n_{v,3} \right)  \right). 
\end{align*}
The above two equalities imply that
\[\chi_{v}(2\sigma) = \exp\left( \dfrac{\pi}{2} \left(\chi_{v}(C)-\ov{\chi_{v}(C)}\right) \right),\] 
for all $v \in \bb{Z}_{4}^{n}$. Now, the result follows from Theorem~\ref{thm:characterization}. 
\end{proof}

\begin{theorem}\label{thm:NoOrder4}

    An oriented Cayley graph on $\bb{Z}_4^n$ does not have MST on a set of size 4.
\end{theorem}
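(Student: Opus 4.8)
The plan is to argue by contradiction, reducing the statement to the nonexistence of a single PST and then exhibiting one character that violates the characterization of Theorem~\ref{thm:characterization}. Suppose $X=\mrm{Cay}(\bb{Z}_4^n,C)$ has MST on a set of size $4$. Using vertex-transitivity we may assume $\abs{\cS_e}=4$, so by Theorem~\ref{thm:cayleysatset} we have $\cS_e=\langle z\rangle$ for some $z\in\bb{Z}_4^n$ of order $4$, and PST occurs from $e$ to $z$ at some time $\tau>0$; conversely, Corollary~\ref{cor:multistate} shows such a $z$ already forces MST on $\langle z\rangle$, so it suffices to rule out PST from $e$ to an order-$4$ element. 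Since $\bb{Z}_4^n$ is abelian, every character is linear, so $\chi_v(e)=1$ and Condition~(\ref{Cond:chi}) reads $\chi_v(z)=\exp(\tau\,\theta_{\chi_v})$, where by \eqref{eq:theta_chi} the eigenvalue is $\theta_{\chi_v}=\chi_v(C)-\ov{\chi_v(C)}=2i\,(n_{v,1}-n_{v,3})$.

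The crux is to locate a character on which the eigenvalue $\theta_{\chi_v}$ is forced to vanish while the required phase $\chi_v(z)$ is not $1$. To this end I would test the characters of the form $\chi_{2v}$. For every $w\in C$ the quantity $(2v)^Tw=2\,(v^Tw)$ is even modulo $4$, hence never congruent to $1$ or $3$; therefore $n_{2v,1}=n_{2v,3}=0$ and $\theta_{\chi_{2v}}=0$ for all $v$. Condition~(\ref{Cond:chi}) applied to $\chi_{2v}$ then collapses to $\chi_{2v}(z)=\exp(\tau\cdot 0)=1$. On the other hand $\chi_{2v}(z)=\exp\!\big(2\pi i\,(2v)^Tz/4\big)=(-1)^{v^Tz}$, so the condition forces $v^Tz$ to be even for every $v\in\bb{Z}_4^n$.

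Letting $v$ range over the standard basis vectors then shows that every coordinate of $z$ is even, whence $2z=\mbf{0}$ and $z$ has order at most $2$, contradicting the assumption that $z$ generates a cyclic group of order $4$; this completes the argument. The only genuine obstacle is spotting the right family of test characters: once one observes that doubling $v$ annihilates the odd-residue counts $n_{v,1}$ and $n_{v,3}$, the contradiction is immediate and no delicate control of $\tau$ is required. This is precisely the point where the arithmetic of $\bb{Z}_4$ differs from that of $\bb{Z}_3$, for which no analogous doubling kills the relevant sine terms in \eqref{eq:theta_chi}; this is consistent with MST of size $3$ being realizable on $\bb{Z}_3^n$ while size $4$ is obstructed on $\bb{Z}_4^n$.
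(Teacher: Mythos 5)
Your proof is correct, and it reaches the conclusion by a genuinely different (and more self-contained) route than the paper. The paper's proof is a two-line application of Corollary~\ref{cor:non-exist}: writing $\bb{Z}_4^n=\langle z\rangle\times A$, the character $\lambda$ of $\langle z\rangle$ with $\lambda(z)=-1$ extends to a rational-valued character of $\bb{Z}_4^n$ with $z\notin\mrm{ker}(\lambda)$, and the general non-existence criterion (whose own proof invokes the periodicity and eigenvalue-ratio theorems of \cite{godsil2020perfect}) finishes the argument. You instead work directly with Theorem~\ref{thm:characterization}: doubling the index kills the odd residue counts, so $\theta_{\chi_{2v}}=2i(n_{2v,1}-n_{2v,3})=0$, whence Condition~(\ref{Cond:chi}) forces $\chi_{2v}(z)=(-1)^{v^{T}z}=1$ for every $v$, and running $v$ over the standard basis shows every coordinate of $z$ is even, so $z$ cannot have order $4$. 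It is worth noting that the two arguments secretly use the same characters: the family $\{\chi_{2v}\}$ is precisely the set of rational-valued characters of $\bb{Z}_4^n$, and your computation $n_{2v,1}=n_{2v,3}=0$ is the concrete manifestation of the fact that a real-valued character automatically has $\theta_\chi=\chi(C)-\ov{\chi(C)}=0$, which is what makes the paper's appeal to Corollary~\ref{cor:non-exist} succeed. What each approach buys: yours needs nothing beyond Theorem~\ref{thm:characterization} and elementary arithmetic mod $4$ (no input from \cite{godsil2020perfect}), and it isolates exactly why $\bb{Z}_4$ obstructs size-$4$ MST while $\bb{Z}_3$ permits size-$3$ MST; the paper's is shorter and exercises the general field-of-character-values criterion, the same tool that powers Theorem~\ref{thm:solvableMST}, where several characters with distinct (irrational) value fields must be played against each other and a direct computation of your kind is no longer available.
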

\begin{proof}
Let $\langle z\rangle$ be a cyclic subgroup of order $4$. Let $\lambda$ be the character of $\langle z\rangle$ such that $\lambda(z)=-1$. There is a subgroup $A$ with ${\mathbb Z_4}^n=\langle z\rangle\times A$. Thus, $\lambda$ can be extended to a character of ${\mathbb Z_4}^n$  whose values are rational. The result now follows Corollary~\ref{cor:non-exist}.
\end{proof}

\section{Oriented normal Cayley graphs on extraspecial $3$-groups}\label{sec:extraspecial}
In this section, we construct examples of multiple state transfer in Cayley graphs over extraspecial $3$-groups. An extraspecial $p$-group is a $p$-group $G$ whose centre has order $p$ and $G/Z$ is isomorphic to an elementary abelian $p$-group. We now recall some facts about extraspecial $p$-groups and their complex characters. Background on these groups and their representations can be found in  \cite[\S~5.5]{gorenstein2007finite}. The size of an extraspecial $p$-group must be $p^{2n+1}$ for some integer $n\geq 1$. The conjugacy class containing an element $g \in G \setminus Z$ is the coset $gZ$. Irreducible characters of $G$ are either linear or are of degree $p^{n}$. Linear characters of $G$ factor through its derived subgroup $Z$ and so are the characters of the elementary abelian group $G/Z= \bb{Z}_{p}^{2n}$. Irreducible non-linear characters are indexed by non-trivial irreducible characters of $Z$ and are of the form 
\[\psi_{\lambda} = \begin{cases}
0 & \text{if $g \in G\setminus Z$} \\
p^{n}\lambda(g) & \text{if $g \in Z$},
\end{cases}
\] 
where $\lambda$ is a non-trivial linear character of $Z$.

Let $G$ be an extraspecial $3$-group of size $3^{2n+1}$. Let $Z\cong \bb{Z}_{3}$ denote its centre and consider a generator $z$ of $Z$. Let $F$ denote a basis of $G/Z \cong \bb{Z}_{3}^{n}$ and let $E$ denote the pre-image of $F$ with respect to the quotient map $G \to G/Z$. The set $C:=E \cup \{z\}$ is a union of conjugacy classes and we have $C \cap C^{-1}=\emptyset$. We show that the oriented normal Cayley graph $X := \mrm{Cay}(G,\ C)$ has multiple state transfer on $Z$. This is done using Corollary~\ref{cor:multistate}.

Given a non-linear character $\psi_{\lambda}$ of $G$, we have $\psi_{\lambda}(C)-\ov{\psi_{\lambda}(C)}=3^{n} (\lambda(z)- \ov{\lambda(z)})$. As $\lambda$ is a non-trivial character, $\lambda(z)$ must be a third root of unity, and thus, \[\dfrac{\psi_{\lambda}(C)-\ov{\psi_{\lambda}(C)}}{\psi_{\lambda}(1)}= \begin{cases} \sqrt{3}i\ &\text{if $\lambda(z)= e^{2\pi i/3}$}\\
-\sqrt{3}i &\text{if $\lambda(z)= e^{-2\pi i/3}$}.
\end{cases}\]

Setting $\tau=\dfrac{2 \pi}{3 \sqrt{3}}$, we now observe that $z$ and $C$ satisfy (1) and (2) of Theorem~\ref{thm:characterization}, for all non-linear irreducible characters of $G$. We now show that the same is true for linear characters.

Recall that the linear characters of $G$ are the same as those of $G/Z = \bb{Z}_{3}^{2n}$. Non-trivial linear characters of $\bb{Z}_{3}^{2n}$ are indexed by non-zero vectors of $\bb{Z}_{3}^{2n}$. The character $\psi_{v}$ indexed by a vector $v$ satisfies
\[\psi_{v}(x)= e^{\dfrac{i2\pi v^{t}x}{3}},\] for all $x\in \bb{Z}_{3}^{2n}$. The character $\psi_{v}$ 
defines a character of $G$ by identifying $\psi_{v}=\psi_{v} \circ \pi$, where $\pi: G \to G/Z$ is the standard quotient map. We recall that $C= \{z\}\cup E$, where $E=\pi^{-1}(F)$ and $F$ is a basis for $G/Z$ as a vector space over $\bb{Z}_{3}$.
 We have 
\begin{align*}
\psi_{v}(C)- \ov{\psi_{v}(C)} & = 3 \left( \sum\limits_{x \in F} e^{\dfrac{i2\pi v^{t}x}{3}} - e^{\dfrac{-i2\pi v^{t}x}{3}} \right),
\end{align*} 
and thus, \[\tau \dfrac{\psi_{v}(C)- \ov{\psi_{v}(C)}}{\psi_{v}(1)}= \dfrac{2\pi}{3 \sqrt{3}} \left( \psi_{v}(C)- \ov{\psi_{v}(C)} \right) =i2m \pi, \] for some integer $m$. Since $\psi_{v}(z)/\psi_{v}(z)=1$, we see that condition (2) of Theorem~\ref{thm:characterization} is also satisfied by linear characters of $G$. Now, by Corollary~\ref{cor:multistate}, we have multiple state transfer on $Z$.


\section{Multiple state transfer on sets of size 3 
on oriented normal Cayley graphs of some nonsolvable groups}\label{sec:nonsolvable}
As the conditions of Theorem~\ref{thm:characterization}
can be read off the character table of the group, it is
possible to search for examples by using the computer algebra system GAP \cite{GAP4}, which contains character tables of
many groups and facilitates the extraction and manipulation of the data contained in them. There are many examples of oriented normal Cayley graphs of nonsolvable (even quasisimple) groups that admit MST of size 3.
For illustration we shall describe two such examples, one on a small group, the triple cover of $A_6$, and one on a much larger group, the triple cover
of Janko's third sporadic group $J_3$.

Let $G$ be the unique perfect group of order 1080. This is the unique nonsplit central extension of the alternating group $A_{6}$ by a group of order 3. We state some facts about the group, which can be verified using a computer algebra system such as GAP \cite{GAP4}. This group has exactly two classes of elements of order $12$, two classes of elements of order $6$, and two central elements of order $3$. Moreover, if $g$ is an element of order $12$, $g^{4}$ is in the centre. Pick an element $g$ of order $12$ and set $z:=g^{4} \in Z(G)$. Let $h$ be an element of order $6$ which is not conjugate to $g^2$. We define $C$ to be the set of elements conjugate to an element in  $\{z,g, h\}$. Since $G$ is quasisimple (i.e. $G$ is perfect and $G/Z(G)$ is simple) and $C\nsubset Z(G)$, it is immediate that $C$ generates $G$, so $\mrm{Cay}(G,\ C)$ is connected.
Using the GAP and Theorem~\ref{thm:characterization}, we can verify that in the graph $\mrm{Cay}(G,\ C)$, PST occurs from $1$ to $z$, at a time $\tau = 2\pi/(3\sqrt{3})$. By Corollary~\ref{cor:multistate}, we have multiple state transfer on $Z(G)=\langle z \rangle\cong \bb{Z}_{3}$.

Let $G=3.J_3$, the triple cover of $J_3$, following
the notation in GAP and throughout the literature. 
Choose $z\in Z(G)$ of order 3. Next choose any element
$g$ of order 30. (There are 4 conjugacy classes of
elements of order 30.) Then $g^7$ is also of order 30 
and turns out not to be conjugate to $g$.
We take $C$ to be the union of
the conjugacy classes of $g$, $g^7$ and $z$.
The connectedness  of $\mrm{Cay}(G,\ C)$ follows, as in the previous example, from the quasisimplicity of $G$.
Using GAP we can check  that the conditions of Theorem~\ref{thm:characterization} hold at time
$\tau = \pm2\pi/(3\sqrt{3})$ (depending on the choice of $z$), so by Corollary~\ref{cor:multistate}, we have MST on $Z(G)$. This example is one of many
for the group $3.J3$ and was chosen for its ease of description.

\section{Multistate transfer in modular maximal-cyclic groups}\label{sec:modular2gps}
Given a positive integer $n$, the modular maximal-cyclic group of order $2^n$, denoted by $M_{2}(n)$, is a group defined by the following presentation:
\[M_{2}(n) =\left\langle x,\sigma\ \big\lvert \ x^{2^{n-1}}=\sigma^2=e,\ \sigma x \sigma =x^{2^{n-2}+1} \right\rangle.\] It is well known and elementary to observe that $\langle x \rangle \trianglelefteq M_{2}(n)$, $M_{2}(n) = \langle x \rangle \rtimes \langle \sigma \rangle $, $Z(M_{2}(n))= \langle x^2 \rangle \cong \bb{Z}_{2^{n-2}}$, and $[M_{2}(n),\ M_{2}(n)] =\langle x^{2^{n-2}} \rangle \cong \bb{Z}_{2}$. In this section, we construct an oriented normal Cayley graph on $M_{2}(n)$, which admits MST on a set of $4$ vertices. 

We now describe the conjugacy classes of $M_{2}(n)$. 
These are well known and can be derived easily. As mentioned above, even powers of $x$ are central. Given any pair of non-commuting elements $h,k$, we have $hkh^{-1}k^{-1} \in [M_{2}(n),\ M_{2}(n)]\setminus \{e\} =\{x^{2^{n-2}}\}$, so $hkh^{-1}=x^{2^{n-2}}k$. Thus, the conjugacy class of  any non-central element $h \in M_{2}(n)$ is the two-element set $\{h,x^{2^{n-2}}h\}$.

We now describe the characters of $M_{2}(n)$. The linear characters are exactly the same as those of its abelianization, $M_{2}(n)/\langle x^{2^{n-2}}\rangle \cong \bb{Z}_{2} \times \bb{Z}_{2^{n-2}}$. We need the following result about non-linear irreducible characters. Given $\psi \in \irr(\langle x \rangle)$, we define $\psi^{\sigma} \in \irr(M_{2}(n))$ to be the character satisfying $\psi^{\sigma}(y)=\psi(\sigma y\sigma)=\psi(y^{2^{n-2}+1})$, for all $y \in \langle x \rangle$.

\begin{lemma}\label{lem:nonlin}
 Given a non-linear character $ \chi \in \irr(M_{2}(n))$, then (i) $\chi(g)=0$, for all $g \notin Z(M_{2}(n))$; and (ii) $\chi(x^{2^{n-3}})=\pm 2i$.     
\end{lemma}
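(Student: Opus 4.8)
The plan is to realize every non-linear $\chi \in \irr(M_2(n))$ as a character induced from the abelian normal subgroup $\langle x \rangle$, and then to read off both statements from the explicit induced-character formula. Write $G := M_2(n)$. Since $\langle x\rangle$ is abelian and normal of prime index $2$ in $G$, Clifford theory applies: for a linear character $\psi \in \irr(\langle x\rangle)$, the inertia group is either all of $G$ or just $\langle x\rangle$, according as $\psi^\sigma = \psi$ or $\psi^\sigma \neq \psi$, with $\psi^\sigma$ as defined above. In the first case $\psi$ extends to a linear character of $G$; in the second, $\mrm{Ind}_{\langle x\rangle}^{G}\psi$ is irreducible of degree $2$. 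Hence the non-linear irreducible characters of $G$ are precisely the degree-$2$ characters $\chi = \mrm{Ind}_{\langle x\rangle}^{G}\psi$ with $\psi^\sigma \neq \psi$, and because $\langle x\rangle$ is normal of index $2$ the induced-character formula reads
\[
\chi(g) = \begin{cases} \psi(g) + \psi^\sigma(g), & g \in \langle x\rangle,\\ 0, & g \notin \langle x\rangle.\end{cases}
\]
This already yields $\chi(g)=0$ for every $g\notin\langle x\rangle$.

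Next I would make $\psi$ explicit. Writing $\psi(x)=\exp(2\pi i a/2^{n-1})$ for some $a\in\{0,\dots,2^{n-1}-1\}$ and invoking the defining relation $\sigma x\sigma=x^{2^{n-2}+1}$, a short computation gives
\[
\psi^\sigma(x^k)=\psi\bigl(x^{k(2^{n-2}+1)}\bigr)=(-1)^{ak}\,\psi(x^k),
\]
since the $2^{n-2}$-part of the exponent contributes $a k \cdot 2^{n-2}/2^{n-1}=ak/2$. Consequently
\[
\chi(x^k)=\bigl(1+(-1)^{ak}\bigr)\psi(x^k),
\]
which equals $2\psi(x^k)$ when $ak$ is even and $0$ when $ak$ is odd. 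The non-inertia condition $\psi^\sigma\neq\psi$ is exactly the requirement that $a$ be odd; granting this, $\chi(x^k)=0$ precisely when $k$ is odd.

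These two vanishing statements together give (i): every element of $G\setminus Z(G)$ either lies outside $\langle x\rangle$ or is an odd power of $x$, because $Z(G)=\langle x^2\rangle$ consists exactly of the even powers of $x$, and $\chi$ vanishes in both cases. For (ii) I would take $k=2^{n-3}$, which is even (in the range $n\ge 4$ relevant to the size-$4$ MST construction, so that $x^{2^{n-3}}$ is the order-$4$ central element); then
\[
\chi(x^{2^{n-3}})=2\psi(x^{2^{n-3}})=2\exp\!\left(\tfrac{2\pi i a}{4}\right)=2\,i^{a},
\]
and since $a$ is odd this is $\pm 2i$.

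The only genuine work is the identity $\psi^\sigma(x^k)=(-1)^{ak}\psi(x^k)$, extracted from the defining relation, together with the recognition that non-$\sigma$-invariance of $\psi$ is equivalent to $a$ being odd. Once these are established, both parts follow immediately from the induced-character formula, so I expect the verification of that single congruence to be the crux.
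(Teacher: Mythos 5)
Your proof is correct, and for part (ii) it is essentially the paper's argument in different clothing: the paper works with restriction (Clifford theory gives $\chi|_{\langle x\rangle}=\psi+\psi^{\sigma}$ with $\psi\neq\psi^{\sigma}$, which forces $\psi$ to be faithful), whereas you work with induction ($\chi=\mrm{Ind}_{\langle x\rangle}^{G}\psi$ with $a$ odd); these are equivalent formulations of the same decomposition, and the evaluation at $x^{2^{n-3}}$ is the same computation in both. The genuine difference is in part (i). You split into two cases: $\chi$ vanishes off $\langle x\rangle$ by the induced-character formula, and vanishes at odd powers of $x$ by the explicit identity $\chi(x^{k})=\bigl(1+(-1)^{ak}\bigr)\psi(x^{k})$, using that $G\setminus Z(G)$ is exactly the union of these two sets. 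The paper instead uses a single conjugation argument: for any non-central $g$ there is $h$ with $hgh^{-1}=x^{2^{n-2}}g$ (because the derived subgroup is $\{e,x^{2^{n-2}}\}$), whence $\chi(g)=\frac{\chi(x^{2^{n-2}})}{\chi(e)}\chi(g)=-\chi(g)$, since the central element $x^{2^{n-2}}$ acts by the scalar $-1$ on the representation. The paper's mechanism is uniform (no case analysis, no need to describe $G\setminus Z(G)$ explicitly) and generalizes to any group whose non-central classes are cosets of a central order-$2$ commutator subgroup; yours stays entirely inside the induced-character computation and has the merit of being fully explicit --- in particular, you surface the hypothesis that $2^{n-3}$ be even (i.e.\ $n\geq 4$), which part (ii) genuinely requires and which the paper's proof leaves implicit.
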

\begin{proof}
 By Clifford theory (\cite[Corollary~6.19]{isaacs1994character}, since $[M_{2}(n): \langle x\rangle]=2$, given any non-linear irreducible character $\chi$, its restriction $\chi|_{\langle x \rangle}$ is a sum of distinct conjugates of a linear character of $\psi \in \irr(\langle x \rangle)$. The set of conjugates of $\psi$ is $\{\psi,\ \psi^{\sigma}\}$ and thus $\chi|_{\langle x \rangle}=\psi +\psi^{\sigma}$. Since $\psi \neq \psi^{\sigma}$, we must have $o(\psi)= 2^{n-1}$. Thus, $\psi(x)$ must be a primitive $2^{n-1}$th root of unity. Therefore, (ii) follows from the fact that $\chi|_{\langle x \rangle}=\psi +\psi^{\sigma}$.

 Given a non-central element $g$, let $h \in M_{2}(n)$ be such that $hgh^{-1}g^{-1} \neq e$. Since $\{e,\ x^{2^{n-2}}\}$ is the commutator, it follows that $hgh^{-1}g^{-1}=x^{2^{n-2}}$, and thus $hgh^{-1}=x^{2^{n-2}}g$. Therefore, 
 \begin{equation}\label{eq:noncentral}
 \chi(g)=\chi(hgh^{-1})=\chi(x^{2^{n-2}}g) =\dfrac{\chi(x^{2^{n-2}})}{\chi(e)} \chi(g),
 \end{equation}
 with the last equality following from the fact that $x^{2^{n-2}}$ is a central element. As $x^{2^{n-2}}$ is an element of order $2$ and $\psi$ is faithful, it follows that $\psi(x^{2^{n-2}})=-1$, and thus $\chi(x^{2^{n-2}})=-2$. Using \eqref{eq:noncentral}, it follows that
 $\chi(g)=-\chi(g)$, and thus (i) is true.
\end{proof}

\begin{theorem}\label{thm:mm-c}
 Let $n\geq 5$, \[M_{2}(n) =\left\langle x,\sigma\ \lvert \ x^{2^{n-1}}=\sigma^2=e,\ \sigma x \sigma =x^{2^{n-2}+1} \right\rangle, \]
and let 
\[ C= \{x^{2^{n-3}},\ x^{2^{n-4}}\sigma,\ x^{2^{n-2}+2^{n-4}}\sigma \} \bigcup \{x^{4k+1}\ :\ 0\leq k \leq 2^{n-3}-1 \}\]
 Then $\mrm{Cay}(M_{2}(n), C)$ is an oriented normal Cayley graph on $M_{2}(n)$, which admits MST on $\left\langle x^{2^{n-3}} \right\rangle$.   
\end{theorem}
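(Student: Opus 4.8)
The plan is to invoke Corollary~\ref{cor:multistate}: since $z := x^{2^{n-3}}$ is a central element of order $4$, it suffices to confirm that $C$ is an admissible connection set (a union of conjugacy classes with $C\cap C^{-1}=\emptyset$) and that $z$ satisfies Condition~(\ref{Cond:chi}) of Theorem~\ref{thm:characterization} for a suitable $\tau$. Guided by Corollary~\ref{cor:BMalgSize}, which forces $\tau\in\pi\bb{Q}$ when $\abs{\cS_e}=4$, the natural candidate is $\tau=\pi/4$. Once Condition~(\ref{Cond:chi}) is verified for this $\tau$, Corollary~\ref{cor:multistate} immediately yields MST on $\langle z\rangle$, a set of size $4$.

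First I would check that $\mrm{Cay}(M_2(n),C)$ is oriented and normal. Using the description of the conjugacy classes recalled above---central elements are singletons, and a non-central $h$ has class $\{h,x^{2^{n-2}}h\}$---each block of $C$ is a union of classes: $\{z\}$ is central; $\{x^{2^{n-4}}\sigma,\ x^{2^{n-2}+2^{n-4}}\sigma\}$ is a single class because $x^{2^{n-2}}\cdot x^{2^{n-4}}\sigma=x^{2^{n-2}+2^{n-4}}\sigma$; and the odd powers $x^{4k+1}$ are interchanged in pairs $k\leftrightarrow k+2^{n-4}\pmod{2^{n-3}}$, so that set is closed under conjugation. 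For $C\cap C^{-1}=\emptyset$, I would note that $z^{-1}=x^{3\cdot 2^{n-3}}$ is an even power different from $z$; that each $(x^{4k+1})^{-1}$ is a power $\equiv 3\pmod 4$, hence outside $C$; and that $(x^{m}\sigma)^{-1}=x^{-m}\sigma$ for $m\in\{2^{n-4},5\cdot 2^{n-4}\}$ (here $n\geq 5$ makes $m\cdot 2^{n-2}\equiv 0\pmod{2^{n-1}}$), landing on the exponents $7\cdot 2^{n-4}$ and $3\cdot 2^{n-4}$, neither of which equals the exponents $2^{n-4},5\cdot 2^{n-4}$ occurring in $C$.

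For the non-linear irreducible characters the verification is short. By Lemma~\ref{lem:nonlin}, such a $\chi$ vanishes off $Z(M_2(n))$ and has $\chi(e)=2$, $\chi(z)=\pm 2i$. Since $z$ is the only element of $C$ lying in the centre, $\chi(C)=\chi(z)$, so $\frac{\chi(C)-\ov{\chi(C)}}{\chi(e)}=\pm 2i$ and $\exp\!\big(\tfrac{\pi}{4}(\pm 2i)\big)=\pm i=\frac{\chi(z)}{\chi(e)}$, with matching signs, as required.

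The linear characters carry the real work. Writing $\chi(x)=\exp(2\pi i b/2^{n-2})$ and $\chi(\sigma)=\pm1$, one computes $\chi(z)=(-1)^b$, while both $\sigma$-terms evaluate to the same value $i^{b}\chi(\sigma)$, and the geometric sum $\sum_{k}\chi(x^{4k+1})$ vanishes unless $2^{n-4}\mid b$, in which case it equals $2^{n-3}\chi(x)$. Since $\chi(e)=1$, Condition~(\ref{Cond:chi}) with $\tau=\pi/4$ reduces to the single congruence $\mathrm{Im}\,\chi(C)\equiv 2b\pmod 4$, which I would check by splitting on the parity of $b$ and on whether $2^{n-4}\mid b$. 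The only delicate cases are $b=2^{n-4}$ and $b=3\cdot 2^{n-4}$, where $\mathrm{Im}\,\chi(C)=\pm 2^{n-3}$; the congruence then holds precisely because $n\geq 5$ gives $2^{n-3}\equiv 0\pmod 4$. This last point is the main obstacle: every other case is routine, but the hypothesis $n\geq 5$ is exactly what forces these two ``resonant'' linear characters to cooperate, and isolating this is the crux of the argument.
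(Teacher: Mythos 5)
Your proposal is correct and follows essentially the same route as the paper: verify Condition~(2) of Theorem~\ref{thm:characterization} at $\tau=\pi/4$ --- nonlinear characters via Lemma~\ref{lem:nonlin}, linear characters by explicit case analysis --- and then invoke Corollary~\ref{cor:multistate}. Your parametrization of the linear characters by the exponent $b$ is equivalent to the paper's split by character order (order $\geq 8$, order $4$, order $\leq 2$), with your two ``resonant'' cases $b\in\{2^{n-4},\,3\cdot 2^{n-4}\}$ corresponding exactly to the paper's order-$4$ case, which is likewise where the paper uses the hypothesis $n\geq 5$.
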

\begin{proof}
Using Corollary~\ref{cor:multistate}, it suffices to show that PST occurs from $e$ to $x^{2^{n-3}}$, at time $\tau =\pi/4$. In other words, using Theorem~\ref{thm:characterization}, it suffices to show that
\begin{equation}\label{eq:mmcchar}
\exp\left(\dfrac{\theta_{\phi}\pi}{4} \right) = \dfrac{\phi(x^{2^{n-3}})}{\phi(1)},
\end{equation}
for all $\phi \in \irr(M_{2}(n))$.

We first show that the  above is true for linear characters. As we mentioned at the start of this section, we have  $[M_{2}(n),\ M_{2}(n)] =\langle x^{2^{n-2}}\rangle$, and thus given a linear character $\chi$ of $M_{2}(n)$, we have $\chi^{2^{n-2}}=1$. By $o(\chi)$, we denote the order of $\chi$, as an element of the dual group $\widehat{M_{2}(n)}$.  

\paragraph{Case 1:} Assume that $o(\chi)=2^{r}$, for some $r\geq 3$. Then, we have $\chi(x^{2^{r-1}})=-1$ and thus $\chi(x^{4(k+2^{r-3})+1})=-\chi(x^{4k+1})$, for all $0\leq k \leq 2^{n-3}-1$. We note that the map $x^{4k+1} \mapsto x^{4(k+2^{r-3})+1}$ is a bijection on $\{x^{4k+1}\ :\ 0\leq k \leq 2^{n-3}-1 \}$. Therefore, we have 
 \begin{align*}
  \sum\limits_{k=0}^{2^{n-3}-1} \chi(x^{4k+1}) & = \sum\limits_{k=0}^{2^{n-3}-1} \chi(x^{4(k+2^{r-3})+1})\\
  & = -\sum\limits_{k=0}^{2^{n-3}-1} \chi(x^{4k+1}),
 \end{align*}
 with the last equality following from the fact that $\chi(x^{4(k+2^{r-3})+1})=-\chi(x^{4k+1})$, for all $k$. Thus, $\sum\limits_{k=0}^{2^{n-3}-1} \chi(x^{4k+1})=0$.
 We now have
\begin{align*}
\theta_{\chi} &= \chi(x^{2^{n-3}}) + 2\chi(x^{2^{n-4}} \sigma) + \sum\limits_{k=0}^{2^{n-3}-1} \chi(x^{4k+1})- \ov{\left(\chi(x^{2^{n-3}}) + 2\chi(x^{2^{n-4}} \sigma) + \sum\limits_{k=0}^{2^{n-3}-1} \chi(x^{4k+1}) \right)} \\
& = \chi(x^{2^{n-3}}) + 2\chi(x^{2^{n-4}} \sigma) -\ov{ \left( \chi(x^{2^{n-3}}) + 2\chi(x^{2^{n-4}} \sigma) \right) }.
\end{align*}
Since $x^{2^{n-2}} \in [M_{2}(n), M_{2}(n)] \leq \mrm{ker}(\chi)$, we have  $\chi(x^{2^{n-2}})=1$, we have  $\chi(x^{2^{n-3}})=\pm 1$, and $\chi(x^{2^{n-4}} \sigma)= \chi(\sigma) \chi(x^{2^{n-4}})=\pm\chi(\sigma)\sqrt{\chi(x^{2^{n-3}})}$. Since $\sigma$ is an element of order $2$, it follows that $\chi(\sigma)=\pm 1$, and thus $\chi(x^{2^{n-4}} \sigma)= \pm \sqrt{\chi(x^{2^{n-3}})}$. Therefore, we have
\begin{align*}
 \theta_{\chi} = \begin{cases}
 0 & \text{if $\chi(x^{2^{n-3}})=1$} \\
  \pm 4 i & \text{if $\chi(x^{2^{n-3}})=-1$,}
 \end{cases}
\end{align*}
and thus \eqref{eq:mmcchar} is satisfied by all linear characters of order at least $8$.

\paragraph{Case 2:} Assume that $o(\chi) =4$. In this case, have $\chi(x)=\pm i$, and thus, 
\begin{align}\label{eq:order4}
     \sum\limits_{k=0}^{2^{n-3}-1} \chi(x^{4k+1})= \pm 2^{n-3} i, \text{when $o(\chi)=4$}. 
 \end{align}
We also have $\chi(x^{2^{n-3}}), \chi(x^{2^{n-4}} \sigma) \in \bb{Q}$. Therefore, we have $\theta_{\chi}= \pm 2^{n-2} i$. As $n\geq 5$, we observe that (a) $\chi(x^{2^{n-3}})=1$ and (b) $(\pi/4)\theta_{\chi}$ must be an even multiple of $\pi$. Therefore \eqref{eq:mmcchar} is satisfied for all linear characters of order $4$.

\paragraph{Case 3:} We now consider the case $o(\chi) =2$. In this case, must have $\chi(x)=\pm 1$ and $\chi(\sigma)=\pm1$. Therefore, $\chi(g) \in \bb{Q}$, and thus, $\theta_{\chi}=0$. Since $\chi(x)=\pm 1$, we have $\chi(x^{2^{n-3}})=1$, and thus \eqref{eq:mmcchar} holds for linear characters of order $2$ as well.

We now move on to non-linear irreducible characters. Using Lemma~\ref{lem:nonlin}, it follows that for a non-linear irreducible $\chi$, we have $\chi(x^{2^{n-3}})/\chi(1)= \pm i$ and thus $\theta_{\chi}= 2\chi(x^{2^{n-3}})/\chi(1)= \pm 2i$, with the same sign.  It follows that \eqref{eq:mmcchar} holds for all non-linear irreducible characters as well. 
\end{proof}

\begin{remark} For $M_{2}(2)$ it is an easy exercise to check that for $C=\{x^2, x\sigma, x^5\sigma\}$ we have MST on the central subgroup $\langle x^2\rangle$ 
of order $4$.
\end{remark}

\section{Wreath products}\label{sec:wreath}
Let $X:=\mrm{Cay}(G, C)$ be an oriented normal Cayley graph that admits PST from $e$ to $z$, at a time $\tau$. In this section, we construct an oriented normal Cayley graph on $G \wr S_{n}$, which admits PST.

We first describe the group $G\wr S_{n}$. By $G^{n}$, we denote the $n$-fold direct product of $G$. We use boldfaced lower case letters to represent elements of $G^n$; and given $\mbf{x} \in G^{n}$ and $i \in G$, we define $x_{i} \in G$ to be such that $\mbf{x}=(x_{1},\ldots,x_{n})$. The symmetric group $S_{n}$ acts on the group $G^{n}$ by permuting the indices of its elements: $\pi\cdot x =\pi \cdot (x_{1},\ldots, x _{n})= (x_{\pi^{-1}(1)}, \ldots, x_{\pi^{-1}(n)})$, for all $\pi\in S_{n}$ and $\mbf{x}\in S_{n}$. The {\it wreath product} $G\wr S_{n}$ is the semidirect product $G^{n} \rtimes S_{n}$ defined by the above action. We express elements of $G\wr S_{n}$ as tuples of the form $(\mbf{x} ;\ \pi)$, where $\mbf{x} \in G^{n}$ and $\pi \in S_{n}$.

We first describe the conjugacy classes and all the (complex) irreducible characters of $G \wr S_{n}:= G^{n} \rtimes S_{n}$. The description of these are well known and can be found in many standard texts on character theory of symmetric groups. For instance, the reader may refer to \cite[Chapter 4]{James_1984} and \cite[Appendix B of Chapter 1]{macdonald1998symmetric}. 

 Consider $\mbf{y}=(y_{1},\ldots,y_{n}) \in G^{n}$ and an $r$-cycle $\kappa\in S_{n}$. Let $a$ be the smallest positive integer such that $\kappa=(a,\kappa(a)\ldots \kappa^{r}(a))$. We define the product (as elements of $G$) 
 \begin{equation}\label{eq:cyclicproduct}
 \mbf{y}_{\kappa} := y_{a}y_{\kappa^{-1}(a)} \cdots    y_{\kappa^{-i}(a)}  \cdots y_{\kappa^{-r+1}(a)}.
 \end{equation}
 This is known as the {\it cycle product} (cf. \cite[4.2.1]{James_1984}) associated with $\kappa$ with respect to $\mbf{y}$. As an example, given $\mbf{g}=(a,b,c) \in G^{3}$, $\sigma =(1,2,3)$, and $\beta=(2,3)$, we have $\mbf{g}_{\sigma}=acb$, $\mbf{g}_{\sigma^{-1}}=abc$, and $\mbf{g}_{\beta}=bc$. It is an easy exercise to observe that $(\mbf{x}, \alpha)\in G\wr S_{n}$ is conjugate to $(\mbf{y},\ \kappa)$ if and only if (a) $\alpha$ is also an $r$-cycle and (b) $\mbf{x}_{\alpha}$ is conjugate to $\mbf{y}_{\kappa}$ in $G$. More generally, given $g=(\mbf{x}; \pi) \in G \wr S_{m}$, with $\pi=\prod \pi_{i}$ as a decomposition of $\pi$ into disjoint cycles, and a conjugacy class $\mfk{K}$ of $G$, define $m_{g,r}(\mfk{K})$ to be the number of $r$-cycles $\kappa$ in $\pi$ such that $\mbf{x}_{\kappa} \in \mfk{K}$. Let $\mrm{ty}_{g}(\mfk{K})$ denote the partition $\left\langle 1^{m_{g,1}(\mfk{K})}, \ldots, r^{m_{g,r}(\mfk{K})}, \ldots \right\rangle$. We have now defined a partition-valued function $\mrm{ty}_{g}$ on  the set $\mrm{Cl}(G)$ of conjugacy classes of $G$, with $\sum\limits_{\mfk{K} \in \mrm{Cl}(G)} |\mrm{ty}_{g}(\mfk{K})| =n$. It is an easy exercise to show that $g,h \in G \wr S_{n}$ are conjugate if and only if $\mrm{ty}_{g}= \mrm{ty}_{h}$. Conversely, given a partition-valued function $P$ on $\mrm{Cl}(G)$ with $\sum |P(\mfk{K})|=n$, there is some $g \in G \wr S_{n}$ such that $P=\mrm{ty}_{g}$. 
 
To describe the irreducible characters of $G \wr S_{n}$ we must  consider  certain irreducible characters of $G\wr S_{m}$, for $m\leq n$. 
Consider $\phi \in \irr(G)$ and let $V$ be the $G$-module affording $\phi$ as its character. Then $\bigotimes^{m} V$ is a $G^{m}$-module affording $\otimes^{m} \phi \in \irr(G^{m})$ as its character. We can view $\bigotimes^{m} V$ as a $G\wr S_{m}$-module via the action defined by
\[((x_{1},\ \ldots x_{m});\ \pi)\cdot \bigotimes\limits_{i=1}^{m} v_{i} = \bigotimes\limits_{i=1}^{m} x_{i}\cdot v_{\pi^{-1}(i)},\] for all $((x_{1},\ \ldots x_{m});\ \pi)\in G \wr S_{m}$ and $\bigotimes\limits_{i=1}^{m} v_{i} \in \bigotimes^{m} V$. Let $\widetilde{\otimes^{m}\phi}$ denote the character of $G \wr S_{m}$, afforded by $\bigotimes^{m}V$.  It is well known that $\widetilde{\otimes^{m} \phi} \in \irr(G \wr S_{m})$, for all $\phi \in \irr(G)$. We also have $\widetilde{\otimes^{m} \phi}|_{G^{n}}= \otimes^{m} \phi$. As $G^{m} \lhd G \wr S_{m}$, any irreducible character of $S_{m}$ can be identified as an element of $\irr(G \wr S_{m})$. It is well known that the irreducible characters of $S_{m}$ are in one-one correspondence with integer partitions of $m$. Given $\lambda \vdash m$, we denote $\rho_{\lambda}$ to be the irreducible character associated with $\lambda$. 
Given $\lambda \vdash n$ and $\phi \in \irr(G)$, the  character 
\begin{equation}\label{eq:primitive}
\widetilde{\phi^{m}_{\lambda}}:= \widetilde{\otimes ^{m} \phi} \otimes \rho_{\lambda},
\end{equation}
is an irreducible character of $G \wr S_{m}$. Every character in $\irr(G\wr S_{n})$ can be expressed in terms of elements of $\{\widetilde{\phi^{m}_{\lambda}} \ : \ m \leq n\ \&\ \lambda\vdash m\}$.

Given an $\irr(G)$-valued function $f$ on $[n]$, define $\chi_{f}:= \bigotimes\limits_{i=1}^{n} f(i)$. It is well known that given $\chi\in \irr(G^{n})$, there is an $\irr(G)$-valued function $f$ on $[n]$ such that $\chi =\chi_{f}$. Irreducible characters of $G\wr S_{n}$ can be characterized using the well-known method of ``little groups'' due to Wigner and Mackey. Given $g \in G\wr S_{n}$ and $\chi\in \irr(G^{n})$, $\chi^{g}$ is the character satisfying $\chi^{g}(\mbf{x})=\chi(g^{-1}\mbf{x}g)$, for all $\mbf{x}\in G^{n}$. By a {\it conjugate} of $\chi$, we mean a character of the form $\chi^{g}$, for some $g\in G\wr S_{n}$. The {\it inertia subgroup} of $\chi$ is the group $\{g \in G\wr S_{n}\ :\ \chi^{g}=\chi\}$

The set $\{\chi_{f \circ \pi}\ :\ \pi \in S_{n} \}$ is the set of conjugates of $\chi_{f}$. Given $A\subseteq [n]$, let $S_{A} \leq S_{n}$ denote the pointwise stabilizer of $[n]\setminus A$. Given a disjoint partition $\mcal{B}$ of  $[n]$, let $S_{\mcal{B}} \leq S_{n}$ be the subgroup of permutations preserving $\mcal{B}$. A {\it Young subgroup} is a subgroup of the form $S_{\mcal{B}}$, for some partition $\mcal{B}$ of $[n]$.
Let $\mcal{B}_{f}$ be the partition $\{ f^{-1}(\phi)\ :\ \phi\in \irr(G)\}$. Then the inertia subgroup of $\chi_{f}$ in $G \wr S_{n}$, is precisely the group $G \wr S_{\mcal{B}_{f}}$. Consider a partition-valued function $\Lambda$ on $\irr(G)$ such that $\Lambda(\phi)$ is an integer partition of $|f^{-1}(\phi)|$, for all $\phi \in \irr(G)$. Then $ \widetilde{\phi^{|f^{-1}(\phi)|}_{\Lambda(\phi)}} \in \irr(G \wr S_{f^{-1}(\phi)})$ (cf. \eqref{eq:primitive}), and thus \[\bigotimes_{\phi \in \irr(G)} \left(\widetilde{\phi^{|f^{-1}(\phi)|}_{\Lambda(\phi)}} \right) \in \irr(G \wr S_{\mcal{B}_{f}})\] is an irreducible character of the inertia group of $\chi_{f}$, whose restriction to $G^{n}$ has $\chi_{f}$ as an irreducible summand. 

Now, by Clifford theory, given $f:[n] \to \irr(G)$ and given a partition-valued function $\Lambda$ on $\irr(G)$ with $|\Lambda(\phi)|=|f^{-1}(\phi)|$, for all $\phi \in \irr(G)$, we have
\begin{equation}\label{eq:charwreathproduct}
\psi_{f,\Lambda}:= \mrm{Ind}_{G \wr S_{\mcal{B}_{f}}}^{G \wr S_{n}} \left[ \bigotimes_{\phi \in \irr(G)} \left(\widetilde{\phi^{|f^{-1}(\phi)|}_{\Lambda(\phi)}} \right)\right] \in \irr(G\wr S_{n}).
\end{equation}
Observing that we have as many distinct $\psi_{f,\Lambda}$s as the number of conjugacy classes in $G\wr S_{n}$, we note that every irreducible character of $G \wr S_{n}$ is of the form $\psi_{f,\Lambda}$. Before moving on, we note that given $\phi \in \irr(G)$ and $\lambda \vdash n$, we have $\widetilde{\phi^{n}_{\lambda}}= \psi_{f_{\phi}, \Lambda_{\phi,\lambda}}$, where $f_{\phi}:[n] \to \irr(G)$ is such that $f_{\phi}(i)=\phi$, for all $i \in [n]$, and $\Lambda_{\phi,\lambda}$ is the partition-valued function on $\irr(G)$ which takes $\phi$ to $\lambda$ and every other irreducible character to the zero partition.

Taking Theorem~\ref{thm:characterization} into consideration, it helps to compute quantities of the form $\dfrac{\psi(g)-\ov{\psi(g)}}{\psi(1)}$, for $g \in G$ and $\psi \in \irr(G)$. The first such computations we need is the following result.

\begin{lemma}\label{lem:centralcharacterc1}
Consider $f:[n]\to \irr(G)$ and a partition-valued function $\Lambda$ on $\irr(G)$ satisfying $|\Lambda(\phi)|=|f^{-1}(\phi)|$, for all $\phi \in \irr(G)$. Then, we have
\begin{equation*}
\dfrac{\psi_{f,\Lambda}(((x,e,\ldots, e); (1)))-\ov{\psi_{f,\Lambda}(((x,e,\ldots, e); (1)))}}{\psi_{f,\Lambda}(1)} = \sum\limits_{\mu \in \irr(G)}  \dfrac{|f^{-1}(\mu)| (\mu(x)-\ov{\mu(x)})}{n\mu(1)}.
\end{equation*}
\end{lemma}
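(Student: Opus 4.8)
The plan is to compute the character value $\psi_{f,\Lambda}\big(((x,e,\ldots,e);(1))\big)$ explicitly and then read off the desired quantity. The key observation is that the argument $((x,e,\ldots,e);(1))$ lies in the base group $G^n \lhd G\wr S_n$, with all coordinates trivial except the first, which equals $x$. Since $\psi_{f,\Lambda}$ is induced from the inertia subgroup $G\wr S_{\mcal{B}_f}$ via \eqref{eq:charwreathproduct}, I would first use the induced-character formula to express its value on a base-group element as a sum over coset representatives $\pi\in S_n/S_{\mcal{B}_f}$ of the inducing character evaluated at the conjugate $((x,e,\ldots,e);(1))^{\pi}$. Conjugating a base-group element by a permutation simply relabels which coordinate carries $x$, so the $\pi$-conjugate has $x$ in position $\pi(1)$. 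Thus only the tensor factor $\widetilde{\phi^{|f^{-1}(\phi)|}_{\Lambda(\phi)}}$ corresponding to $\phi=f(\pi(1))$ sees a nontrivial argument.

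Next I would evaluate $\widetilde{\phi^m_{\lambda}}$ on a base-group element with a single nontrivial coordinate. Because $\widetilde{\otimes^m\phi}$ is afforded by $\bigotimes^m V$ with $G^m$ acting coordinatewise, its value on $(y_1,\ldots,y_m;\,\mrm{id})$ is $\prod_i \phi(y_i)$; with a single nontrivial coordinate $x$ this is $\phi(x)\phi(1)^{m-1}$. Tensoring with $\rho_\lambda$ (evaluated at the identity permutation, giving $\rho_\lambda(1)$) and summing over the tensor factors shows that the inducing character, evaluated at a conjugate placing $x$ in a position indexed by $\mu=f(\pi(1))$, equals $\dfrac{\mu(x)}{\mu(1)}$ times its own degree. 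Dividing by $\psi_{f,\Lambda}(1)$ normalizes everything, and the induced sum over the $n$ cosets groups naturally by the value $\mu=f(\pi(1))$: exactly $|f^{-1}(\mu)|$ of the $n$ positions carry the character $\mu$. This produces the weighted average
\[
\dfrac{\psi_{f,\Lambda}\big(((x,e,\ldots,e);(1))\big)}{\psi_{f,\Lambda}(1)} = \sum_{\mu\in\irr(G)} \dfrac{|f^{-1}(\mu)|\,\mu(x)}{n\,\mu(1)}.
\]

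Finally, subtracting the complex conjugate (which affects only the $\mu(x)$ factors, since the multiplicities and degrees are real) yields the stated formula. The main obstacle I anticipate is bookkeeping the induced-character computation cleanly: one must verify that the coset representatives of $S_{\mcal{B}_f}$ in $S_n$ contribute each position $\pi(1)\in[n]$ with the correct uniform multiplicity, and that the stabilizer structure does not introduce spurious factors that fail to cancel against $\psi_{f,\Lambda}(1)$. The cleanest route is to note that $((x,e,\ldots,e);(1))$ is conjugate in $G\wr S_n$ only to base-group elements with a single nontrivial coordinate, so Frobenius reciprocity or the standard formula for the value of an induced character on such elements reduces the computation to counting how the single marked coordinate distributes across the blocks of $\mcal{B}_f$, which is precisely the $|f^{-1}(\mu)|/n$ weighting.
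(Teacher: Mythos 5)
Your proof is correct and is essentially the paper's argument in dual form: the Frobenius coset sum you set up for $\mrm{Ind}_{G\wr S_{\mcal{B}_f}}^{G\wr S_n}$ is exactly what the paper packages, via Clifford theory, as the restriction $\mrm{Res}^{G\wr S_n}_{G^n}\psi_{f,\Lambda}=\prod_{\phi}\rho_{\Lambda(\phi)}(1)\times\sum_{\pi}\chi_{f\circ\pi}$ evaluated at the element with a single marked coordinate. The bookkeeping you flag as the remaining obstacle (that each block $f^{-1}(\mu)$ receives the marked coordinate with overall weight $|f^{-1}(\mu)|/n$ once one divides by $\psi_{f,\Lambda}(1)$) is precisely the paper's multinomial count of $\Vert f\Vert_{\mu}$, the number of conjugates $h$ of $f$ with $h(1)=\mu$, so your sketch closes without difficulty.
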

\begin{proof}
We recall that the set $\{\chi_{f\circ \pi}\ :\ \pi \in S_{n}\}$ is the set of conjugates of $\chi_{f}$.
 Using Clifford theory(\cite[Theorems 6.2 and 6.11]{isaacs1994character}), we have
 \begin{equation}\label{eq:wreathrestriction}
\mrm{Res}^{G \wr S_{n}}_{G^{n}} \psi_{f,\Lambda} = \prod\limits_{\phi \in f\left([n]\right)} \rho_{f, \phi}(1) \times \left[\sum\limits_{h\in\{f\circ \pi\ :\ \pi \in S_{n}\}} \chi_{h}\right].
 \end{equation}
By a conjugate of $f$, we mean an element of the set $\{f\circ \pi\ :\ \pi \in S_{n}\}$. Given $i\in [n]$ and $\mu \in \irr(G)$, let $\Vert f \Vert_{\mu}$ denote the number of conjugates $h$ of $f$, with $h(1)=\mu$. Elementary counting arguments yield that 
\[\Vert f \Vert_{\mu} =  \dfrac{(n-1)!}{\left(|f^{-1}(\mu)|-1\right)! \prod\limits_{\phi \in \irr(G) \setminus \{\mu\}} |f^{-1}(\phi)|! } \]

Using \eqref{eq:wreathrestriction}, we have 
\begin{align*}
\psi_{f,\Lambda}(((x,e,\ldots, e); (1))) & = \prod\limits_{\phi \in \irr(G)} \rho_{\Lambda(\phi)}(1) \times \left[\sum\limits_{\mu \in \irr(G)} \Vert f\Vert_{\mu} \mu (x)  \prod\limits_{\nu \in \irr(G) \setminus \{\mu\}} \nu(1). \right ]
\end{align*}

The result follows by noting that \[\psi_{f,\Lambda}(1) = \left(\prod\limits_{\phi \in \irr(G)} \rho_{\phi, f}(1)\phi(1)\right) \times \Vert f \Vert_ {\mu} \times n\times |f^{-1}(\mu)|^{-1}.\]
\end{proof}

We also make use of the following formula.
\begin{lemma}\label{lem:centralcharacterc2}
Let $f:[n]\to \irr(G)$ be a function, let  $\Lambda$ a partition-valued function on $\irr(G)$ satisfying $|\Lambda(\phi)|=|f^{-1}(\phi)|$, for all $\phi \in \irr(G)$, and let $\pi \in S_{n}$ be an $n$-cycle.
Then the following statements are true for all $\mbf{x}\in G^{n}$.
\begin{enumerate}
    \item If $|f([n])|>1$, then 
    \[\dfrac{\psi_{f,\Lambda}((\mbf{x}; \pi)-\ov{\psi_{f,\Lambda}((\mbf{x}; \pi)}}{\psi_{f,\Lambda}(1)}=0;\] and
    \item if $f([n])=\{\phi\}$, then 
    \[
    |G|^{n-1} \times\dfrac{\psi_{f,\Lambda}((\mbf{x}; \pi)-\ov{\psi_{f,\Lambda}((\mbf{x}; \pi)}}{\psi_{f,\Lambda}(1)} = k_{f,\Lambda}|Z(G)|^{(n-1)} \dfrac{\phi\left( \mbf{x}_{\pi}  \right)-\ov{\phi\left( \mbf{x}_{\pi}  \right)}}{\phi(1) \times (n-1)!},
    \]
    for some $k_{f,\Lambda} \in \bb{Z}$. 
\end{enumerate}
\end{lemma}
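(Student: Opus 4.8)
The plan is to evaluate $\psi_{f,\Lambda}$ directly on the element $(\mbf{x};\pi)$ with $\pi$ an $n$-cycle, handling the two cases separately according to whether $f$ is constant. In both cases the engine is the description of $\psi_{f,\Lambda}$ as an induced character, together with the standard formula for the value of a wreath-product character on a single long cycle.

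For part (1) I would apply the induced-character formula to $\psi_{f,\Lambda}=\mrm{Ind}_{G\wr S_{\mcal{B}_f}}^{G\wr S_n}\Theta$, where $\Theta=\bigotimes_{\phi\in\irr(G)}\widetilde{\phi^{|f^{-1}(\phi)|}_{\Lambda(\phi)}}$ is the inertia-group character. The value of an induced character at $g$ is a sum of $\Theta(tgt^{-1})$ over those $t$ for which $tgt^{-1}$ lies in $G\wr S_{\mcal{B}_f}$, and the $S_n$-component of $t(\mbf{x};\pi)t^{-1}$ is always a conjugate of $\pi$, hence again an $n$-cycle. When $|f([n])|>1$ the partition $\mcal{B}_f$ has at least two blocks, so $S_{\mcal{B}_f}$ is a proper Young subgroup; every element of it preserves the blocks and therefore has no cycle of length $n$. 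Thus no conjugate of $(\mbf{x};\pi)$ lands in $G\wr S_{\mcal{B}_f}$, the sum is empty, and $\psi_{f,\Lambda}((\mbf{x};\pi))=0$. The numerator $\psi_{f,\Lambda}-\ov{\psi_{f,\Lambda}}$ then vanishes, as claimed.

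For part (2) the hypothesis $f([n])=\{\phi\}$ forces $\mcal{B}_f=\{[n]\}$, so $\psi_{f,\Lambda}=\widetilde{\phi^n_\lambda}=\widetilde{\otimes^n\phi}\otimes\rho_\lambda$ with $\lambda:=\Lambda(\phi)\vdash n$. Evaluating the twisted permutation action on $\bigotimes^n V$ by a trace computation (or invoking the standard wreath-product character formula in \cite{macdonald1998symmetric,James_1984}), the value of $\widetilde{\otimes^n\phi}$ on the single $n$-cycle $\pi$ is the cycle product $\phi(\mbf{x}_\pi)$, while the inflated factor contributes $\rho_\lambda(\pi)$; hence
\[
\psi_{f,\Lambda}((\mbf{x};\pi))=\phi(\mbf{x}_\pi)\,\rho_\lambda(\pi),\qquad \psi_{f,\Lambda}(1)=\phi(1)^n\,\rho_\lambda(1).
\]
Since $\rho_\lambda(\pi)\in\bb{Z}$ is real it factors out of $\psi_{f,\Lambda}-\ov{\psi_{f,\Lambda}}$, and after multiplying by $|G|^{n-1}$ and rearranging one reaches the asserted identity with
\[
k_{f,\Lambda}=\left(\frac{|G|}{|Z(G)|\,\phi(1)}\right)^{n-1}\cdot\frac{(n-1)!\,\rho_\lambda(\pi)}{\rho_\lambda(1)}.
\]

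The main obstacle is verifying that this $k_{f,\Lambda}$ is genuinely an integer, which rests on two classical facts. First, since $Z(G)$ is an abelian normal subgroup, Ito's theorem gives $\phi(1)\mid[G:Z(G)]$, so $|G|/(|Z(G)|\,\phi(1))$ is a positive integer, as is its $(n-1)$-st power. Second, the $n$-cycles form a single conjugacy class of $S_n$ of size $(n-1)!$, and the associated central character value $(n-1)!\,\rho_\lambda(\pi)/\rho_\lambda(1)$ is an algebraic integer; as $S_n$-characters take rational-integer values, this quotient is an ordinary integer (and is in fact zero unless $\lambda$ is a hook). The product of these two integers is $k_{f,\Lambda}$, completing the argument. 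I expect the bookkeeping for the trace formula and the correct identification of the cycle product $\mbf{x}_\pi$ up to conjugacy—so that the class function $\phi$ is well defined on it—to be the most error-prone routine part.
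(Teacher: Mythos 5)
Your proof is correct and follows essentially the same route as the paper's: part (1) via the Frobenius formula for induced characters together with the observation that a proper Young subgroup $S_{\mcal{B}_f}$ contains no $n$-cycle, and part (2) via the standard wreath-product character formula $\psi_{f,\Lambda}((\mbf{x};\pi))=\phi(\mbf{x}_\pi)\rho_\lambda(\pi)$, with integrality of $k_{f,\Lambda}$ deduced from $\phi(1)\,|Z(G)| \mid |G|$ and from the fact that the central character value $(n-1)!\,\rho_\lambda(\pi)/\rho_\lambda(1)$ is a rational algebraic integer. The only cosmetic difference is that you invoke Ito's theorem for the divisibility $\phi(1)\mid [G:Z(G)]$, whereas the paper cites Isaacs' result that $\phi(1)$ divides the index of the center of the character; both give exactly what is needed.
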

\begin{proof}
We recall from the definition \eqref{eq:charwreathproduct} that $\psi_{f,\Lambda}$ is a character induced from the subgroup $G\wr S_{\mcal{B}_{f}}$, where $S_{\mcal{B}_{f}}$ is the subgroup of $S_{n}$ preserving the partition $\mcal{B}_{f}=\{f^{-1}(\mu)\ : \mu \in \irr(G)\}$ of $[n]$. We note that $S_{\mcal{B}_{f}}$ contains an $n$-cycle if and only if $|f([n])|=1$. Therefore, in case $|f([n])|>1$, no element of $G \wr S_{n}$ is conjugate to $(\mbf{x}; \pi)$. Now, the Frobenius formula for induced characters implies part (1).

We now assume that $f([n])=\{\phi\}$. In this case, we have $S_{\mcal{B}_{f}}=S_{n}$; $\lambda:=\Lambda(\phi) \vdash n$ and $\Lambda(\mu)=[0]$, for all $\mu \in \irr(G)\setminus \{\phi\}$; and $\psi_{f,\Lambda}= \widetilde{\phi^{n}_{\lambda}}$. Using the well-known formula (see \cite[Lemma 4.3.9]{James_1984} or \cite[$(8.2)$, Appendix B]{macdonald1998symmetric}) for values of $\widetilde{\phi^{n}_{\lambda}}$, we have 
\begin{align*}
   |G|^{n-1}\times \dfrac{\psi_{f,\Lambda}((\mbf{x}; \pi)-\ov{\psi_{f,\Lambda}((\mbf{x}; \pi)}}{\psi_{f,\Lambda}(1) } & = |G|^{n-1} \times \dfrac{\rho_{\lambda}(\pi)\phi(\mbf{x}_{\pi})-\ov{\rho_{\lambda}(\pi)\phi(\mbf{x}_{\pi})}}{\phi^{n}(1) \rho_{\lambda}(1)}\\
    &= \dfrac{\rho_{\lambda}(\pi)}{\rho_{\lambda}(1)} \times\dfrac{|G|^{n-1}}{\phi^{n}(1)} \dfrac{\phi(\mbf{x}_{\pi})-\ov{\phi(\mbf{x}_{\pi})}}{\phi(1) }. 
\end{align*} 
 It is well known that $\frac{|G|}{\phi(1)} \equiv 0 \pmod{|Z(G)|}$. (see \cite[3.12]{isaacs1994character}). There are exactly $(n-1)!$ number of $n$-cycles in $S_{n}$. Using a well-known result \cite[Theorem 3.7]{isaacs1994character} on integrality of character values and the fact that irreducible characters of $S_{n}$ take on integer values, it follows that $\dfrac{\rho_{\lambda}(\pi)\times (n-1)!}{\rho_{\lambda}(1)}$ is an integer. Thus, the quantity $k_{f,\Lambda}= \dfrac{|G|^{n-1}}{\phi^{n-1}(1) |Z(G)|^{n-1}} \times \dfrac{\rho_{\lambda}(\pi)\times (n-1)!}{{\rho_{\lambda}(1)}}$. This concludes the proof of part (2).  
\end{proof}

\begin{theorem}\label{thm:wreath}
Let $n\geq 2$ be a natural number and let $X:=\mrm{Cay}(G,C)$ be an oriented normal Cayley graph that admits PST from $e$ to $z\in Z(G)$, at a time $\tau$. 
Let 
\begin{subequations}\label{eq:wreathconnex}
\begin{align}
\tilde{C}_{1} &=  \bigcup\limits_{i \in [n]} \left\{((x_{1}, \ldots x_{n}); (1))\ :\ x_{i}\in C\ \& \ x_{j}=e,\ \forall j\neq i\right\}\\
\tilde{C}_{2} &= \left\{(\mbf{x};\pi)\ :\ \text{$\pi$ is an $n$-cycle}\ \&\ \mbf{x}_{\pi}\in C\right\}.
\end{align} 
\end{subequations}
Then $X \wr n := \mrm{Cay}(G\wr S_{n},\ \tilde{C}_{1}\cup \tilde{C}_{2})$ admits PST at time $\tau$ from $(e,e,\ldots, e)$ to $(z,z,\ldots, z)$.
\end{theorem}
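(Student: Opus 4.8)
The plan is to verify the two conditions of Theorem~\ref{thm:characterization} for the graph $X\wr n$ on $G\wr S_n$, applied to the element $(\mbf{z};(1))$ with $\mbf{z}=(z,\ldots,z)$ and the connection set $\tilde C:=\tilde C_1\cup\tilde C_2$. Before that I would check that $\tilde C$ is a legitimate oriented normal connection set: it is a union of conjugacy classes because $C$ is, because the $S_n$-action merely permutes coordinates, and because the class of an element $(\mbf{x};\pi)$ with $\pi$ an $n$-cycle is determined by the $G$-class of its cycle product $\mbf{x}_\pi$; and $\tilde C\cap\tilde C^{-1}=\emptyset$ because inverting sends a cycle product (resp.\ a single non-identity coordinate) lying in $C$ to one lying in $C^{-1}$, while the disjointness of $\tilde C_1$ from $\tilde C_2^{-1}$ is automatic for $n\geq 2$ since the two parts have incompatible cycle types. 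Condition~(1) is then immediate: $z\in Z(G)$ forces $(\mbf{z};(1))\in Z(G\wr S_n)$.

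The heart of the argument is to compute $\theta_{\psi}:=\frac{\psi(\tilde C)-\ov{\psi(\tilde C)}}{\psi(1)}$ for an arbitrary $\psi=\psi_{f,\Lambda}\in\irr(G\wr S_n)$ by splitting it along $\tilde C_1$ and $\tilde C_2$. For the $\tilde C_1$ part I use that all $n$ placements of a fixed $c\in C$ into a single coordinate are conjugate in $G\wr S_n$, so $\psi(\tilde C_1)=n\sum_{c\in C}\psi(((c,e,\ldots,e);(1)))$; feeding this into Lemma~\ref{lem:centralcharacterc1} and summing over $c$ collapses the inner sum to $\mu(C)-\ov{\mu(C)}$, giving a $\tilde C_1$-contribution of $\sum_{\mu\in\irr(G)}|f^{-1}(\mu)|\,\theta_\mu$, where $\theta_\mu=\frac{\mu(C)-\ov{\mu(C)}}{\mu(e)}$ is the eigenvalue of $X$. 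For the $\tilde C_2$ part, Lemma~\ref{lem:centralcharacterc2}(1) annihilates every term unless $f$ is constant, say $f\equiv\phi$; in that case I would count that, for each of the $(n-1)!$ $n$-cycles $\pi$ and each target $c\in C$, exactly $|G|^{n-1}$ tuples $\mbf{x}$ satisfy $\mbf{x}_\pi=c$, so the prefactors $(n-1)!$ and $|G|^{n-1}$ appearing in Lemma~\ref{lem:centralcharacterc2}(2) cancel and leave a $\tilde C_2$-contribution of $k_{f,\Lambda}|Z(G)|^{n-1}\theta_\phi$. Altogether $\theta_\psi=\sum_\mu|f^{-1}(\mu)|\theta_\mu$ when $f$ is non-constant, and $\theta_\psi=\bigl(n+k_{f,\Lambda}|Z(G)|^{n-1}\bigr)\theta_\phi$ when $f\equiv\phi$.

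It remains to evaluate the left-hand side of Condition~(\ref{Cond:chi}). Since $(\mbf{z};(1))$ is central and $\psi|_{G^n}$ is a sum of coordinate-permuted copies of $\otimes_i f(i)$, all of which agree on the central tuple, Schur's lemma gives $\frac{\psi((\mbf{z};(1)))}{\psi(1)}=\prod_{\mu}\omega_\mu^{|f^{-1}(\mu)|}$ with $\omega_\mu=\frac{\mu(z)}{\mu(e)}$. The PST hypothesis on $X$, through Theorem~\ref{thm:characterization}, says precisely that $\omega_\mu=\exp(\tau\theta_\mu)$ for every $\mu\in\irr(G)$, so in the non-constant case $\exp(\tau\theta_\psi)=\prod_\mu\omega_\mu^{|f^{-1}(\mu)|}$ matches the left side exactly. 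In the constant case the left side equals $\omega_\phi^{\,n}$, while $\exp(\tau\theta_\psi)=\omega_\phi^{\,n}\cdot\omega_\phi^{\,k_{f,\Lambda}|Z(G)|^{n-1}}$.

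The main obstacle, and the one genuinely new ingredient beyond bookkeeping, is disposing of this extra factor $\omega_\phi^{\,k_{f,\Lambda}|Z(G)|^{n-1}}$ in the constant case. Here I would invoke the two facts built into Lemma~\ref{lem:centralcharacterc2}(2): that $k_{f,\Lambda}$ is an \emph{integer}, and that $\omega_\phi$ is a root of unity whose order divides $o(z)$, hence divides $|Z(G)|$ by Lagrange. Since $n\geq 2$ we have $|Z(G)|\mid|Z(G)|^{n-1}$, so $\omega_\phi^{\,|Z(G)|^{n-1}}=1$ and the spurious factor is trivial, making the constant case match as well. Both conditions of Theorem~\ref{thm:characterization} then hold for $(\mbf{z};(1))$, yielding PST from $(e,\ldots,e)$ to $(z,\ldots,z)$ at time $\tau$.
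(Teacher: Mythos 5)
Your proposal is correct and follows essentially the same route as the paper's proof: the same two-case split via $\psi_{f,\Lambda}$, the same use of Lemmas~\ref{lem:centralcharacterc1} and~\ref{lem:centralcharacterc2} with the $(n-1)!$ and $|G|^{n-1}$ counts to get $\theta_\psi$, and the same disposal of the factor $\omega_\phi^{k_{f,\Lambda}|Z(G)|^{n-1}}$ via the order of $\omega_\phi$ dividing $|Z(G)|$. Your explicit check that $\tilde C_1\cup\tilde C_2$ is a union of conjugacy classes with $\tilde C\cap\tilde C^{-1}=\emptyset$ is a small addition the paper leaves implicit, but it does not change the argument.
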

\begin{proof}
By Theorem~\ref{thm:characterization}, it suffices to show that 
\[\dfrac{\ov{\psi(z,\ldots, z ) }}{\psi(1)} = \exp\left(\tau \theta_{\psi}\right),\] for all $\psi \in \irr(G \wr S_{n})$. Here,
 \begin{equation}\label{eq:tpsiwr}
\theta_{\psi} = \dfrac{\psi(\tilde{C}_{1})+\psi(\tilde{C}_{2})-\ov{\psi(\tilde{C}_{1})-\psi(\tilde{C}_{2})}}{\psi(1)}.
\end{equation}

Consider some $\psi \in \irr(G \wr S_{n})$.
Following the description of $\irr(G \wr S_{n})$ given at the start of this section, given $\psi \in \irr(G\wr S_{n})$, there exists $f:[n] \to \irr{G}$ and a partition-valued function $\Lambda$ on $\irr(G)$ with $|\Lambda(\phi)|=|f^{-1}(\phi)|$ (for all $\phi \in \irr(G)$) such that $\psi=\psi_{f,\Lambda}$ (as defined in \eqref{eq:charwreathproduct}). We first compute $\theta_{\psi_{f,\Lambda}}$.

\paragraph{Case 1:} Assume that $|f([n])|>1$. Using Lemma~\ref{lem:centralcharacterc2}, it follows that
\[\theta_{\psi_{f, \Lambda}} = \dfrac{\psi_{f, \Lambda}(\tilde{C}_{1}))-\ov{\psi_{f, \Lambda}(\tilde{C}_{1})}}{\psi_{f, \Lambda}(1)}.\] We note that every element of $\tilde{C}_{1}$ is conjugate to an element of the form $((x,e,\ldots ,e) ; (1))$ for some $x\in C$, and that the number of $S_{n}$-conjugates in $\tilde{C}_{1}$ of the element $((x,e,\ldots ,e) ; (1))$ (with $x \in C$) is $n$ . It now follows from Lemma~\ref{lem:centralcharacterc1} that 
\begin{align*}
\dfrac{\psi_{f, \Lambda}(\tilde{C}_{1})}{\psi_{f, \Lambda}(1)} = n\sum\limits_{\phi \in \irr(G)}  \dfrac{|f^{-1}(\phi)|\phi(C)}{n\phi(1)},
\end{align*}
and therefore, if $\theta_{\phi}$ is the eigenvalue associated with $\phi$, of $\mrm{Cay}(G,C)$, we have
\begin{equation}\label{eq:c1sum}
\theta_{\psi_{f, \Lambda}} = \sum\limits_{\phi \in \irr(G)} |f^{-1}(\phi)| \theta_{\phi},\ \text{provided $|f([n]|>1$.}
\end{equation}

 \paragraph{Case 2:} Now, we assume that $f([n])=\{\phi\}$. Set $\lambda:=\Lambda(\phi)$. Fix an $n$-cycle, $\sigma \in S_{n}$. We note that every element in $\tilde{C}_{2}$ is conjugate to an element of the form $(\mbf{x}; \sigma)$ with $\mbf{x}_{\sigma} \in C$. We count that for each $c \in C$, that $X_{c}:=\{(\mbf{x}; \sigma) \ : \mbf{x}_{\sigma} = c \}$ is a set of size $|G|^{n-1}$.  Since every element of $X_{c}$ lies in the same conjugacy class, using Lemma~\ref{lem:centralcharacterc2}, we have
 \begin{align}\label{eq:formulac2}
     \sum\limits_{g \in X_{c}}\dfrac{\psi_{f,\Lambda}(g)-\ov{\psi_{f,\Lambda}(g)}}{\psi_{f, \Lambda}(1)} & =k_{f, \lambda} |Z(G)|^{n-1} \dfrac{\phi(c)-\ov{\phi(c)}}{\phi(1)\times (n-1)!},
 \end{align}
 for some $k_{f,\Lambda}\in \bb{Z}$.
 We recall that there are exactly $(n-1)!$ number of $n$-cycles in $S_{n}$. Thus, we have
\begin{align*}
\dfrac{\psi_{f, \Lambda}(\tilde{C}_{2})-\ov{\psi_{f, \Lambda}(\tilde{C}_{2})}}{\psi_{f, \Lambda}(1)} &= (n-1)!\sum\limits_{c \in C}\sum\limits_{g \in X_{c}}\dfrac{\psi_{f,\Lambda}(g)-\ov{\psi_{f,\Lambda}(g)}}{\psi_{f, \Lambda}(1)} \\
&= \sum\limits_{c \in C}k_{f,\Lambda}|Z(G)|^{n-1} \dfrac{\phi(c) - \ov{\phi(c)}}{\phi(1)} \ \text{(using \eqref{eq:formulac2})}\\
&=k_{f,\Lambda}|Z(G)|^{n-1}\theta_{\phi}.
\end{align*}
Arguing as in Case 1, using Lemma~\ref{lem:centralcharacterc1}, we have
\[\dfrac{\psi_{f, \Lambda}(\tilde{C}_{1})-\ov{\psi_{f, \Lambda}(\tilde{C}_{1})}}{\psi_{f, \Lambda}(1)}=n\theta_{\phi},\] and therefore, we have 

\begin{equation}\label{eq:c2sum}
\theta_{\psi_{f, \Lambda}} = n \theta_{\phi} + k_{f,\Lambda}|Z(G)|^{n-1}\theta_{\phi},\ \text{, for some $k_{f,\Lambda}\in \bb{Z}$, provided $f([n])=\{\phi\}$.}
\end{equation}

Using, \eqref{eq:tpsiwr}, \eqref{eq:c1sum} and \eqref{eq:c2sum}, we have 
\begin{align}\label{eq:taupsi}
\exp\left( \tau \theta_{\psi_{f,\Lambda}}\right) &= \begin{cases} \prod\limits_{\phi \in \irr(G)} \left[ \exp\left( \tau \theta_{\phi} \right) \right]^{|f^{-1}(\phi)|} & \text{if $|f([n])|>1$,}\\
\exp{(\tau\theta_{\phi})}^{n} \times \exp(\tau \theta_{\phi})^{k_{\psi} |Z(G)|^{n-1}} &\text{if $f([n])=\{\phi\}$.} 
\end{cases}
\end{align}
Since $\mrm{Cay}(G, C)$ admits PST from $e$ to $z$ at time $\tau$, by Theorem~\ref{thm:characterization}, we have \[\exp\left( \tau \theta_{\phi} \right) = \frac{\ov{\phi(z)}}{\phi(1)},\] for all $\phi \in \irr(G)$. Applying the above in \eqref{eq:taupsi}, yields

\begin{align*}
\exp\left( \tau \theta_{\psi_{f,\Lambda}}\right) & = \begin{cases}
\prod\limits_{\phi \in \irr(G)} \left( \frac{\ov{\phi(z)}}{\phi(1)} \right)^{|f^{-1}(\phi)|}  & \text{if $|f([n])|>1$,} \\
\left( \dfrac{\ov{\phi(z)}}{\phi(1)} \right)^{n} \times \left(\dfrac{\ov{\phi(z)}}{\phi(1)}\right)^{k_{f,\Lambda}|Z(G)|^{n-1}} & \text{if $f([n])=\{\phi\}$.}
\end{cases}
\end{align*} 
Since $z \in Z(G)$ and since $\dfrac{\ov{\phi(z)}}{\phi(1)}$ is a $|z|$-th root of unity, it follows that $\dfrac{\ov{\phi(z)}}{\phi(1)}^{|Z(G)|}=1$. Therefore, we have
\begin{align*}
\exp\left( \tau \theta_{\psi_{f,\Lambda}}\right) & = \prod\limits_{\phi \in \irr(G)} \left( \dfrac{\ov{\phi(z)}}{\phi(1)} \right)^{|f^{-1}(\phi)|} \\
&= \dfrac{\ov{\psi_{f,\Lambda}(z,\ldots, z)} }{\psi_{f,\Lambda}(1)}.
\end{align*} 
The result now follows from Theorem~\ref{thm:characterization}.
\end{proof}

\begin{remark}
Recall that in \S~\ref{sec:nonsolvable}, we constructed connected oriented normal Cayley graphs on nonsolvable groups, which admit MST on sets of size $3$. It is natural to ask if there are ``nonsolvable'' examples in the case of MST on sets of size $4$. Theorem~\ref{thm:wreath} helps us construct infinitely many such examples. Let $G$, $\mcal{C}$, $\tilde{C}_{1}$, and $\tilde{C}_{2}$ be as in Theorem~\ref{thm:wreath}. We recall the well-known fact that $\mrm{Cay}(G, \mcal{C})$ has as many (strongly) connected components as the index $[G : \left\langle C \right\rangle]$, with each component being isomorphic to $\mrm{Cay}(\left\langle C \right\rangle,\ C)$. Assume that $C$ generates $G$. We note that $\tilde{C}_{1}$ generates $G^{n}$. We recall the fact that $n$-cycles in $S_{n}$ generate $S_{n}$ when $n$ is even and $A_{n}$ when $n$ is odd. Therefore, the subgroup $\left\langle \tilde{C}_{1} \cup \tilde{C_{2}} \right\rangle$ is $G\wr S_{n}$ when $n$ is even and $G\wr A_{n}$, when $n$ is odd. 
As $A_{n}$ is a nonabelian simple group when $n\geq 5$, both $G\wr S_{n}$ and $G \wr A_{n}$ are nonsolvable for $n\geq 5$. Theorem~\ref{thm:wreath} allows us to construct examples of MST of size 4 in connected oriented normal Cayley graphs of nonsolvable groups. For example, if $X$ is one of the examples in \cite[10.2]{GodsilZhan} which exhibits MST of size 4, then $X\wr 6$, is a connected oriented normal Cayley graph on a group with $S_{6}$ as one of its composition factors. Theorem~\ref{thm:wreath} asserts that $X \wr 6$ admits MST on a set of size $4$. 
\end{remark}  

We end this section by observing that a version of Theorem~\ref{thm:wreath} for unoriented normal Cayley graphs holds {\it mutatis mutandis}. We state the same, without proof.

\begin{theorem}\label{thm:wreathundirected}
Let $n\geq 2$ be a natural number and let $X:=\mrm{Cay}(G,C)$ be a normal Cayley graph that admits PST from $e$ to $z\in Z(G)$, at a time $\tau$. 
Let 
\begin{subequations}
\begin{align*}
\tilde{C}_{1} &=  \bigcup\limits_{i \in [n]} \left\{((x_{1}, \ldots x_{n}); (1))\ :\ x_{i}\in C\ \& \ x_{j}=e,\ \forall j\neq i\right\}\\
\tilde{C}_{2} &= \left\{(\mbf{x};\pi)\ :\ \text{$\pi$ is an $n$-cycle}\ \&\ \mbf{x}_{\pi}\in C\right\}.
\end{align*} 
\end{subequations}
Then $X \wr n := \mrm{Cay}(G\wr S_{n},\ \tilde{C}_{1}\cup \tilde{C}_{2})$ admits PST at time $\tau$ from $(e,e,\ldots, e)$ to $(z,z,\ldots, z)$.    
\end{theorem}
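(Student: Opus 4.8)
The plan is to mirror the proof of Theorem~\ref{thm:wreath} essentially verbatim, the one structural change being the replacement of the oriented walk $U(t)=e^{tA}$, with antisymmetric eigenvalues $\theta_\chi=\frac{\chi(C)-\ov{\chi(C)}}{\chi(e)}$, by the undirected walk $U(t)=e^{-itA}$, where now $A=A_C$ with $C=C^{-1}$ and $\theta_\chi=\frac{\chi(C)}{\chi(e)}\in\bb R$. First I would record the undirected analogue of Theorem~\ref{thm:characterization}: PST from $e$ to $z$ at time $\tau$ holds iff $z\in Z(G)$ and there is a unimodular constant $\gamma$ (forced by the trivial character to equal $e^{-i\tau|C|}=e^{-i\tau\theta_{\mbf 1}}$) with
\[ e^{-i\tau\theta_\chi}=\gamma\,\frac{\chi(z)}{\chi(e)},\qquad\text{for all }\chi\in\irr(G). \]
The derivation copies that of Theorem~\ref{thm:characterization}, the sole difference being that the degree eigenvalue $\theta_{\mbf 1}=|C|$ no longer vanishes, so a genuine global phase $\gamma$ appears; in the oriented case antisymmetry gave $\theta_{\mbf 1}=0$ and hence $\gamma=1$, which is why Theorem~\ref{thm:characterization} carried no phase. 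I would then check that $\tilde C_1\cup\tilde C_2$ of \eqref{eq:wreathconnex} is inverse-closed — using $C=C^{-1}$ together with the fact that $(\mbf x;\pi)^{-1}$ is again supported on an $n$-cycle whose cycle product is conjugate to $\mbf x_\pi^{-1}\in C$ — and is a union of conjugacy classes, so $X\wr n$ is a bona fide undirected normal Cayley graph.

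Next I would reuse Lemmas~\ref{lem:centralcharacterc1} and~\ref{lem:centralcharacterc2} in their \emph{un-antisymmetrised} form. The explicit character computations carried out inside those proofs already give, with no reference to orientation,
\[ \frac{\psi_{f,\Lambda}(\tilde C_1)}{\psi_{f,\Lambda}(1)}=\sum_{\phi\in\irr(G)}|f^{-1}(\phi)|\,\theta_\phi, \]
and, when $f([n])=\{\phi\}$ with $\lambda:=\Lambda(\phi)$,
\[ \frac{\psi_{f,\Lambda}(\tilde C_2)}{\psi_{f,\Lambda}(1)}=k_{f,\Lambda}\,|Z(G)|^{n-1}\,\theta_\phi,\qquad k_{f,\Lambda}\in\bb Z, \]
while $\psi_{f,\Lambda}(\tilde C_2)=0$ when $|f([n])|>1$, since the inertia group $G\wr S_{\mcal B_f}$ then contains no $n$-cycle. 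The integrality of $k_{f,\Lambda}$ again rests only on $|Z(G)|\mid|G|/\phi(1)$ and on $(n-1)!\,\rho_\lambda(\pi)/\rho_\lambda(1)\in\bb Z$. These reproduce \eqref{eq:c1sum} and \eqref{eq:c2sum} with $\theta_\phi=\frac{\phi(C)}{\phi(e)}$, so $\theta_{\psi_{f,\Lambda}}=\sum_\phi|f^{-1}(\phi)|\theta_\phi$ when $|f([n])|>1$, and $\theta_{\psi_{f,\Lambda}}=n\theta_\phi+k_{f,\Lambda}|Z(G)|^{n-1}\theta_\phi$ when $f([n])=\{\phi\}$.

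Finally I would substitute into $e^{-i\tau\theta_{\psi_{f,\Lambda}}}$ and compare against $\frac{\psi_{f,\Lambda}(z,\ldots,z)}{\psi_{f,\Lambda}(1)}=\prod_\phi\zeta_\phi^{|f^{-1}(\phi)|}$, where $\zeta_\phi:=\frac{\phi(z)}{\phi(e)}$ is a $|Z(G)|$-th root of unity. Writing $e^{-i\tau\theta_\phi}=\gamma\zeta_\phi$, the case $|f([n])|>1$ gives $e^{-i\tau\theta_\psi}=\gamma^{\,n}\frac{\psi(z,\ldots,z)}{\psi(1)}$, whereas the case $f([n])=\{\phi\}$ gives, using $\zeta_\phi^{|Z(G)|}=1$,
\[ e^{-i\tau\theta_\psi}=\gamma^{\,n+k_{f,\Lambda}|Z(G)|^{n-1}}\,\frac{\psi(z,\ldots,z)}{\psi(1)}. \]

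This is exactly where the argument ceases to be a literal transcription of the oriented proof, and it is the step I expect to be the main obstacle. For PST at a single time $\tau$ one needs a single global phase $\Gamma$ with $e^{-i\tau\theta_\psi}=\Gamma\frac{\psi(z,\ldots,z)}{\psi(1)}$ for \emph{every} $\psi$; the two cases force $\Gamma=\gamma^{n}$ and $\Gamma=\gamma^{\,n+k_{f,\Lambda}|Z(G)|^{n-1}}$, so the whole theorem reduces to the phase identity $\gamma^{\,k_{f,\Lambda}|Z(G)|^{n-1}}=1$. In the oriented case this was free because $\gamma=1$; here it is not, and one must control the order of $\gamma=e^{-i\tau|C|}$. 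I would attack this by invoking the arithmetic constraints that periodicity of $e$ places on $\tau$ (the undirected analogue of Corollary~\ref{cor:BMalgSize}, forcing $\tau$ to be a rational multiple of $\pi$ or of $\pi/\sqrt{\Delta}$, so that $\gamma$ is a root of unity) and then matching $o(\gamma)$ against $k_{f,\Lambda}|Z(G)|^{n-1}$. I would proceed with care here, since a phase-blind ``mutatis mutandis'' can break down: already for $G=\bb Z_2$, $C=\{1\}$, $n=2$ one has $\gamma=-i$ and the spectrum $\{4,0^{(6)},-4\}$ gives $U(\pi/2)=I$, i.e.\ periodicity rather than transfer to $(z,z)$. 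Thus the substantive content of the theorem is precisely the verification that $\gamma$ has order dividing the products $k_{f,\Lambda}|Z(G)|^{n-1}$ occurring in Case~2.
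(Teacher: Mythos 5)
Your proposal is, in effect, the paper's own argument: the paper states Theorem~\ref{thm:wreathundirected} \emph{without proof}, asserting that the proof of Theorem~\ref{thm:wreath} carries over \emph{mutatis mutandis}, and that transcription is exactly what you attempted. Your transcription is faithful: the un-antisymmetrised versions of Lemmas~\ref{lem:centralcharacterc1} and~\ref{lem:centralcharacterc2} do hold, and they give the Case~1/Case~2 eigenvalue formulas you state. You have also correctly isolated the one feature of the undirected setting that ``mutatis mutandis'' glosses over: for $U(t)=e^{-itA}$ the trivial character contributes the eigenvalue $|C|\neq 0$, so the undirected analogue of Theorem~\ref{thm:characterization} carries a global phase $\gamma=e^{-i\tau|C|}$, whereas in the oriented case the skew-symmetry of $A$ forces the phase to equal $1$ (this is what Lemma~\ref{lem:BMalgP} exploits), which is precisely why the oriented proof closes with no phase bookkeeping. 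Tracking $\gamma$ through your two cases, PST in $X\wr n$ at time $\tau$ requires $\gamma^{\,k_{f,\Lambda}|Z(G)|^{n-1}}=1$ for every Case-2 pair $(f,\Lambda)$, exactly as you say.

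Moreover, your counterexample is valid, and it shows this is not merely an obstacle to one proof strategy: the statement as printed is false. Take $G=\bb{Z}_2$, $C=\{1\}$, so $X=K_2$ has PST at $\tau=\pi/2$ with phase $\gamma=-i$, and take $n=2$. Then $\bb{Z}_2\wr S_2$ is dihedral of order $8$; writing $r:=((1,0);(1\,2))$ and $f:=((1,0);(1))$, one checks $\tilde C_1\cup\tilde C_2=\{f,r^2f\}\cup\{r,r^{-1}\}$, a union of two conjugacy classes, and $(z,z)=r^2$. The five irreducible characters give eigenvalues $4,0,0,0,-4$, i.e.\ spectrum $\{4,0^{(6)},-4\}$, so $U(\pi/2)=e^{-i(\pi/2)A}=I$: the graph is periodic at $\tau$ and no transfer to $(z,z)$ occurs. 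In fact PST from the identity to $r^2$ occurs at \emph{no} time: the linear character $\chi$ with $\chi(r)=1$, $\chi(f)=-1$ and the degree-two character $\rho$ share the eigenvalue $0$, yet $\chi(r^2)=1$ while $\rho(r^2)/\rho(1)=-1$, so no single phase can satisfy the PST equations. The gap is therefore in the paper's claim, not in your write-up.

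The natural repair is the one your analysis dictates: add a hypothesis controlling the phase of the PST in $X$, e.g.\ that $\gamma^{|Z(G)|}=1$ (in particular, phase $1$). Since $n\geq 2$, the exponent $k_{f,\Lambda}|Z(G)|^{n-1}$ is then an integer multiple of $|Z(G)|$, so $\gamma^{\,k_{f,\Lambda}|Z(G)|^{n-1}}=1$ and your transcription closes verbatim; without some such hypothesis no argument can succeed.
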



\begin{thebibliography}{10}
	
	\bibitem{acuaviva2023state}
	Antonio Acuaviva, Ada Chan, Summer Eldridge, Chris Godsil, Matthew
	How-Chun-Lun, Christino Tamon, Emily Wright, and Xiaohong Zhang.
	\newblock State transfer in complex quantum walks.
	\newblock {\em arXiv preprint arXiv:2301.01473}, 2023.
	
	\bibitem{cameron2014universal}
	Stephen Cameron, Shannon Fehrenbach, Leah Granger, Oliver Hennigh, Sunrose
	Shrestha, and Christino Tamon.
	\newblock Universal state transfer on graphs.
	\newblock {\em Linear Algebra Appl.}, 455:115--142, 2014.
	
	\bibitem{Chaves_et_al}
	Rodrigo Chaves, Bruno Chagas, and Gabriel Coutinho.
	\newblock Why and how to add direction to a quantum walk.
	\newblock {\em Quantum Information Processing}, 22(41), 2023.
	
	\bibitem{CheungGodsil}
	W.~Cheung and C.~Godsil.
	\newblock Perfect state transfer in cubelike graphs.
	\newblock {\em Linear Algebra Appl.}, 435:2468--2474, 2011.
	
	\bibitem{Gajendragadkar}
	Dilip Gajendragadkar.
	\newblock A characteristic class of characters of finite $\pi$-separable
	groups.
	\newblock {\em J. Algebra}, 59:237--259, 1979.
	
	\bibitem{GAP4}
	The GAP~Group.
	\newblock {\em {GAP -- Groups, Algorithms, and Programming, Version 4.14.0}},
	2024.
	
	\bibitem{GodsilRST}
	C.~Godsil.
	\newblock Real state transfer.
	\newblock {\em arXiv preprint arXiv:1710.04042}, 2017.
	
	\bibitem{GodsilZhan}
	C.~Godsil and H.~Zhan.
	\newblock Uniform mixing on cayley graphs.
	\newblock {\em Elec. J. Combin.}, 24(3), 2017.
	\newblock Paper 3.20, 27 pages.
	
	\bibitem{godsil2020perfect}
	Chris Godsil and Sabrina Lato.
	\newblock Perfect state transfer on oriented graphs.
	\newblock {\em Linear Algebra Appl.}, 604:278--292, 2020.
	
	\bibitem{godsil2024oriented}
	Chris Godsil and Xiaohong Zhang.
	\newblock Oriented or signed cayley graphs with all eigenvalues integer
	multiples of $\sqrt{\Delta}$.
	\newblock {\em arXiv preprint arXiv:2405.14140}, 2024.
	
	\bibitem{gorenstein2007finite}
	Daniel Gorenstein.
	\newblock {\em Finite groups}, volume 301.
	\newblock American Mathematical Soc., 2007.
	
	\bibitem{isaacs1994character}
	I~Martin Isaacs.
	\newblock {\em Character theory of finite groups}, volume~69.
	\newblock Courier Corporation, 1994.
	
	\bibitem{James_1984}
	Gordon James and Adalbert Kerber.
	\newblock {\em The Representation Theory of the Symmetric Group}.
	\newblock Encyclopedia of Mathematics and its Applications. Cambridge
	University Press, 1984.
	
	\bibitem{kay}
	A.~Kay.
	\newblock Basics of perfect communication through quantum networks.
	\newblock {\em Phys. Rev. A}, 84(022337), 2011.
	
	\bibitem{lato2019quantum}
	Sabrina Lato.
	\newblock Quantum walks on oriented graphs.
	\newblock Master's thesis, University of Waterloo, 2019.
	
	\bibitem{macdonald1998symmetric}
	Ian~Grant Macdonald.
	\newblock {\em Symmetric functions and Hall polynomials}.
	\newblock Oxford university press, 1998.
	
	\bibitem{Zimboras_et_al}
	Z.~Zimbor\'as, M.~Faccin, Z.~K\'ad\'ar, James~D. Whitfield, Ben~P. Lanyon, and
	Jacob Biamonte.
	\newblock Quantum transport enhancement by time-reversal symmetry breaking.
	\newblock {\em Sci. Rep}, 3:2361, 2013.
	
\end{thebibliography}

\end{document}